\theoremstyle{plain}
\tikzset{join/.code=\tikzset{after node path={%
\ifx\tikzchainprevious\pgfutil@empty\else(\tikzchainprevious)%
edge[every join]#1(\tikzchaincurrent)\fi}}}
\tikzset{>=stealth',every on chain/.append style={join},
         every join/.style={->}}
\tikzset{
    >=stealth',
    punkt/.style={
           rectangle,
           rounded corners,
           draw=black, very thick,
           text width=6.5em,
           minimum height=2em,
           text centered},
    pil/.style={
           ->,
           thick,
           shorten <=2pt,
           shorten >=2pt,}
}
\newcommand{\bee}{\begin{enumerate}}
\newcommand{\eee}{\end{enumerate}}
\newcommand{\benn}{\begin{equation*}}
\newcommand{\eenn}{\end{equation*}}
\newcommand{\be}{\begin{equation}}
\newcommand{\ee}{\end{equation}}
\newcommand{\bean}{\begin{eqnarray}}
\newcommand{\eean}{\end{eqnarray}}
\newcommand{\bea}{\begin{eqnarray*}}
\newcommand{\eea}{\end{eqnarray*}}
\newcommand{\Z}{\mathbb{Z}}
\newcommand{\R}{\mathbb{R}}
\newcommand{\op}[1]{\!\!\mathop{\rm ~#1}\nolimits}
\newcommand{\id}{\op{id}}
\newcommand{\zig}{\approx }
\mathchardef\za="710B  
\mathchardef\zb="710C  
\mathchardef\zg="710D  
\mathchardef\zd="710E  
\mathchardef\zve="710F 
\mathchardef\zz="7110  
\mathchardef\zh="7111  
\mathchardef\zy="7112 
\mathchardef\zi="7113  
\mathchardef\zk="7114  
\mathchardef\zl="7115  
\mathchardef\zm="7116  
\mathchardef\zn="7117  
\mathchardef\zx="7118  
\mathchardef\zp="7119  
\mathchardef\zr="711A  
\mathchardef\zs="711B  
\mathchardef\zt="711C  
\mathchardef\zu="711D  
\mathchardef\zf="711E 
\mathchardef\zq="711F  
\mathchardef\zc="7120  
\mathchardef\zw="7121  
\mathchardef\ze="7122  
\mathchardef\zvy="7123  
\mathchardef\zvw="7124  
\mathchardef\zvr="7125 
\mathchardef\zvs="7126 
\mathchardef\zvf="7127  
\mathchardef\zG="7000  
\mathchardef\zD="7001  
\mathchardef\zY="7002  
\mathchardef\zL="7003  
\mathchardef\zX="7004  
\mathchardef\zP="7005  
\mathchardef\zS="7006  
\mathchardef\zU="7007  
\mathchardef\zF="7008  
\mathchardef\zW="700A  
 \newcommand{\cD}{{\cal D}}
\newtheorem{rem}{Remark}
\newtheorem{theo}{Theorem}
\newtheorem{prop}{Proposition}
\newtheorem{lem}{Lemma}
\newtheorem{cor}{Corollary}
\newtheorem{defi}{Definition}
\definecolor{cof}{RGB}{219,144,71}
\definecolor{pur}{RGB}{186,146,162}
\definecolor{greeo}{RGB}{91,173,69}
\definecolor{greet}{RGB}{52,111,72}
\begin{document}

\title{Indeterminacies and models of homotopy limits}
\author{Alisa Govzmann, Damjan Pi\v{s}talo, and Norbert Poncin}
\maketitle

\begin{abstract} In \cite{CompTheo} we studied the indeterminacy of the value of a derived functor at an object using different definitions of a derived functor and different types of fibrant replacement. In the present work we focus on derived or homotopy limits, which of course depend on the model structure of the diagram category under consideration. The latter is not necessarily unique, which is an additional source of indeterminacy. In the case of homotopy pullbacks, we introduce the concept of full homotopy pullback by identifying the homotopy pullbacks associated with three different model structures of the category of cospan diagrams, thus increasing the number of canonical representatives. Finally, we define generalized representatives or models of homotopy limits and full homotopy pullbacks. The concept of model is a unifying approach that includes the homotopy pullback used in \cite{JL} and the homotopy fiber square defined in \cite{Hir} in right proper model categories. Properties of the latter are generalized to models in any model category. \end{abstract}

\small{\vspace{2mm} \noindent {\bf MSC 2020}: 18E35, 18N40, 14A30 \medskip

\noindent{\bf Keywords}: Localization, model category, homotopy category, derived functor, homotopy limit, homotopy pullback, homotopy fiber square}

\thispagestyle{empty}

\section{Introduction}

The classical Kan homotopy category (resp., the Quillen homotopy category) of a model category $\tt M$ is a faint (resp., a strong) [and even a weak (resp., a strict)] localization of $ \tt M $ at its class of weak equivalences. This leads to total derived functors defined as Kan extension, to faintly universal total derived functors and strongly universal ones, for which we proved comparison theorems in \cite{CompTheo}. In particular, Kan and strong right derived functors of a functor which preserves weak equivalences between fibrant objects exist, coincide up to a natural isomorphism, and are given by different types of fibrant replacement (see Theorem \ref{Fundamental0} below). The value of a derived functor at an object belongs therefore to a well-defined isomorphism class of the target {\it homotopy} category, regardless of the definition of a derived functor and the type of fibrant replacement we use. Considered as an object of the target {\it model} category, such a value is thus only well-defined up to a zigzag of weak equivalences (see Theorem \ref {FundamentalCor}). The core topic of the present work is an efficient handling of this indeterminacy.\medskip

Since a derived functor depends on the model structures of the source and target of the original functor, the derived limit functor or homotopy limit functor depends on the model structures of the small diagram category $\tt D:= Fun(S,M)$ and of the underlying category $\tt M$ considered. The model structure $\zs$ of $\tt D$ is not necessarily unique, even if the model structure of the ambient category $\tt M$ remains unchanged and if we take into account that the limit functor must be a right Quillen functor with respect to $\zs\,$. The resulting freedom in choosing $\zs$ is an additional source of indeterminacy (see Theorem \ref{IndeterminacyHoLimTheo}). \medskip

In the case ${\tt S}=\{c\to d\leftarrow b\}$ the diagrams $\tt Fun(S,M)$ are the cospan diagrams $C\to D\leftarrow B$ of $\tt M$ and the homotopy limit is the homotopy pullback. The category of cospan diagrams can be equipped with three Reedy model structures $\zs_i$ ($i\in\{1,2,3\}$) with respect to which the pullback functor is a right Quillen functor. The homotopy pullbacks of a cospan $C\to D\leftarrow B$ with respect to the $\zs_i$ admit as canonical representatives the standard pullbacks of the corresponding $\zs_i$-fibrant replacements of $C\to D\leftarrow B\,.$ We define the full homotopy pullback of $C\to D\leftarrow B$ by identifying its homotopy pullbacks with respect to the $\zs_i\,,$ thus increasing the number of canonical representatives (see Theorem \ref{IndHoPull}).\medskip

We further enhance the flexibility of homotopy limits by defining generalized representatives, also referred to as models. The concept of model is valid in every model category but the model condition can be relaxed in right proper model categories (see Theorem \ref{IndHoPullMod} and Theorem \ref{RightProper}). Models are a unifying approach that captures the notion of homotopy pullback that is used in \cite{JL} and the notion of homotopy fiber square that is defined in right proper model categories equipped with a fixed functorial factorization system in \cite{Hir} (see Corollary \ref{CorLur} and Corollary \ref{HoFibSqCor}). Most results of homotopy fiber squares in right proper model categories remain valid for models or model squares in all model categories (see for example Proposition \ref{PasLaw} and Proposition \ref{ComPara}).\medskip

Most of the results in this paper are not new, but a structured rigorous presentation in an appropriate unifying context does not seem to exist. The proven theorems offer orientation in an environment with many indeterminacies and show that the standard concepts concerned have a pretty good stability with regard to all the necessary choices. Applications can be expected in homotopic algebraic geometry \cite{TV05, TV08, Schwarz, Linear, HAC} and higher supergeometry \cite{Berezinian, Z2nManifolds, LocalForms,Integration}. Indeed, these areas make extensive use of the functor of points and are the contexts from which the need arose to examine the subjects of this paper.\medskip

{\it Conventions and notations}. We assume that the reader is familiar with model categories. Although we use many results of \cite{CompTheo}, we paid attention to independent readability when writing this work. We adopt the definition of a model category that is used in \cite{Hir}. More precisely, a model category is a category $\tt M$ that is equipped with three classes of morphisms called weak equivalences, fibrations and cofibrations. The category $\tt M$ has all small limits and colimits and the 2-out-of-3 axiom, the retract axiom and the lifting axiom are satisfied. Moreover $\tt M$ admits a functorial cofibration - trivial fibration factorization system (Cof - TrivFib factorization) and a functorial trivial cofibration - fibration factorization system (TrivCof - Fib factorization). Furthermore, we work with the Quillen homotopy category $\tt Ho(M)$ of $\tt M\,$, which is a strict localization ${\tt M}[[W^{-1}]]$ of $\tt M$ at its weak equivalences $W$ with localization functor denoted $\zg_{\tt M}\,,$ and we use the Kan extension derived functor operations $\mathbb{L}^{\op{K}},\mathbb{R}^{\op{K}}$ and the strongly universal derived functor operations $\mathbb{L}^{\op{S}},\mathbb{R}^{\op{S}}$ in the sense of \cite{CompTheo}.

\section{Indeterminacy of a derived functor}

For the definition of the left versions of the preceding derived functor operations, existence and uniqueness results, and the type of `commutation' relation they satisfy, we refer the reader to Definition 8 and Propositions 5, 11, 12, 13 in \cite{CompTheo}. The right versions are dual.

\begin{rem}
\emph{Let us emphasize very clearly that the {\small K} derived functors $\mathbb{L}^{\op{K}}F$ and $\mathbb{R}^{\op{K}}G$ of functors $F$ and $G$ between model categories were defined in \cite{CompTheo} up to a unique natural isomorphism and that the symbols $\mathbb{L}^{\op{K}}F$ and $\mathbb{R}^{\op{K}}G$ stand for an arbitrary representative. While in \cite{CompTheo} we systematically identified objects $A$ and $B$ that are connected by a unique or canonical isomorphism, in this text we usually write $A\doteq B$ and not $A=B\,.$ Similarly the {\small S} derived functors $\mathbb{L}^{\op{S}}F$ and $\mathbb{R}^{\op{S}}G$ were defined up to a (not necessarily unique) natural isomorphism and the symbols stand again for any representative.}
\end{rem}

Let $(\za,\zb)$ be any functorial TrivCof - Fib factorization system. For every object $X\in\tt M\,,$ it factors the map $t_X:X\to \ast$ to the terminal object of $\tt M$ into a trivial cofibration $r_X:=\za(t_X)$ followed by a fibration $\zb(t_X)\,$: $$t_X:X\stackrel{\sim}{\rightarrowtail} RX \twoheadrightarrow \ast\;.$$ Regardless of the factorization $$t_X:X\stackrel{\sim}{\to}FX\twoheadrightarrow \ast$$ of $t_X:X\to\ast$ into a weak equivalence $f_X$ followed by a fibration considered, we refer to $FX$ as {\it a} {\it fibrant replacement} of $X\,.$ The object $RX$ we call a {\it fibrant C-replacement} of $X$ (or just a fibrant replacement if we do not want to stress that $r_X$ is a cofibration). From the fact that the factorization $(\za,\zb)$ is functorial it follows that $R$ is an endofunctor of $\tt M\,$. Moreover $r_X:X\to RX$ is functorial in $X\,$: $r$ is a natural transformation $r:\op{id}_{\tt M}\Rightarrow R$ from the identity functor $\op{id}_{\tt M}$ to the {\it fibrant replacement functor} $R$ \cite{Ho99}. Instead of the fibrant C-replacement functor $R$ that is globally defined by the functorial factorization $(\za,\zb)\,,$ we will also use local / object-wise fibrant replacements $FX$ or {\it local fibrant C-replacements} $\tilde{F}X$ such that the map $f_X$ in the factorization $$t_X:X\stackrel{\sim}{\rightarrowtail}\tilde{F}X{\twoheadrightarrow}\ast$$ is $\op{id}_X$ if $X$ is already fibrant. If for every $X$ we choose such a local fibrant C-replacement and if $f:X\to Y\,,$ there is a lifting $\tilde{F}f:\tilde{F}X\to \tilde{F}Y,$ which will play an important role:

\begin{equation}\label{FTilde} \begin{tikzpicture}
 \matrix (m) [matrix of math nodes, row sep=3em, column sep=3em]
   {  \stackrel{}{X}  & \stackrel{}{Y} & \tilde{F}Y  \\
      \tilde{F}X & & \stackrel{}{\ast}  \\ };
 \path[->]
 (m-1-1) edge node[auto] {\small{$f$}} (m-1-2)
 (m-1-2) edge [>->] node[auto] {\small{$\;\;{}_{\widetilde{}}\,\;f_Y$}}(m-1-3)
 (m-2-1) edge [->>] (m-2-3)
 (m-1-1) edge [>->] node[auto] {\small{$\;\;{}_{\widetilde{}}\,\;f_X$}} (m-2-1)
 (m-1-3) edge [->>] (m-2-3)
 (m-2-1) edge [->, dashed] node[auto] {\small{$\tilde{F}f$}} (m-1-3);
\end{tikzpicture}
\end{equation}

From \cite{CompTheo} we also know:

\begin{theo}\label{Fundamental0}
If $G\in\tt Fun(M,N)$ is a functor between model categories that sends weak equivalences between fibrant objects to weak equivalences, its Kan extension right derived functor $$\mathbb{R}^{\op{K}}G\in\tt Fun(Ho(M),Ho(N))$$ and its strongly universal right derived functor $$\mathbb{R}^{\op{S}}G\in\tt Fun(Ho(M),Ho(N))$$ exist and we have \be\label{Fundamental1}\mathbb{R}^{\op{K}}G\doteq\op{Ho}(\zg_{\tt N}\circ G\circ\tilde{F})\doteq\mathbb{R}_R^{\op{S}}G:=\op{Ho}(\zg_{\tt N}\circ G\circ R)\stackrel{\cong}{\Rightarrow} \mathbb{R}^{\op{S}}G\;,\ee where $\tilde{F}$ is a local fibrant C-replacement, $R$ is a fibrant C-replacement functor and $\op{Ho}$ the unique on the nose factorization through $\tt Ho(M)\,$.  This implies that \be\label{Fundamental2}\mathbb{R}^{\op{K}}G\circ\zg_{\tt M}\doteq\zg_{\tt N}\circ G\circ\tilde{F}\doteq\mathbb{R}_R^{\op{S}}G\circ\zg_{\tt M}=\zg_{\tt N}\circ G\circ R\stackrel{\cong}{\Rightarrow}\mathbb{R}^{\op{S}}G\circ\zg_{\tt M}\;,\ee where $\doteq$ denotes a canonical natural isomorphism and $\stackrel{\cong}{\Rightarrow}$ a not necessarily canonical natural isomorphism.
\end{theo}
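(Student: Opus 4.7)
The plan is to verify that the composites $\zg_{\tt N}\circ G\circ R$ and $\zg_{\tt N}\circ G\circ \tilde F$ descend uniquely through $\zg_{\tt M}$ to functors on $\tt Ho(M)$, to recognise the descended functor as a right Kan extension of $\zg_{\tt N}\circ G$ along $\zg_{\tt M}\,$, and then to identify it with the strongly universal derived functor $\mathbb{R}^{\op{S}}G$ up to a (in general non-unique) natural isomorphism.

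First I would carry out a Ken-Brown-style descent argument. For any weak equivalence $w:X\to Y$ in $\tt M\,$, naturality of $r:\op{id}_{\tt M}\Rightarrow R$ together with the 2-out-of-3 axiom yields a weak equivalence $Rw:RX\to RY$ between fibrant objects. By hypothesis $G(Rw)$ is then a weak equivalence in $\tt N\,$, hence $\zg_{\tt N}(G(Rw))$ is an isomorphism, and the universal property of the strict localization $\tt Ho(M)={\tt M}[[W^{-1}]]$ produces the unique on-the-nose factorization $\op{Ho}(\zg_{\tt N}\circ G\circ R)\,$. The same argument applies to $\tilde F$ once one observes that the lift $\tilde Ff$ constructed in diagram (\ref{FTilde}) is a weak equivalence between fibrant objects whenever $f$ is.

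Next I would verify the Kan-extension universal property. The natural transformation $r$ induces $\zg_{\tt N}G\Rightarrow \zg_{\tt N}GR\doteq\op{Ho}(\zg_{\tt N}\circ G\circ R)\circ \zg_{\tt M}\,$, which serves as the candidate unit. Given any pair $(H,\zt:\zg_{\tt N}G\Rightarrow H\zg_{\tt M})\,$, the comparison natural transformation is defined object-wise by
\[
\zt_{RX}\circ \zg_{\tt N}(G(r_X))^{-1},
\]
and its naturality is a diagram chase exploiting that $\zg_{\tt N}\circ G$ inverts weak equivalences between fibrant objects. Running the same argument with $\tilde F$ in place of $R$ yields the first $\doteq$ in (\ref{Fundamental1}), the connecting canonical isomorphism arising from a lift in the TrivCof-Fib square relating $\tilde FX$ and $RX\,$.

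Finally, $\mathbb{R}_R^{\op{S}}G$ is a right derived functor whose unit has isomorphism components on fibrant objects, so the weaker universal property defining $\mathbb{R}^{\op{S}}G$ produces the natural isomorphism $\mathbb{R}_R^{\op{S}}G\stackrel{\cong}{\Rightarrow}\mathbb{R}^{\op{S}}G\,$. The display (\ref{Fundamental2}) then follows from (\ref{Fundamental1}) by precomposition with $\zg_{\tt M}\,$, using that $\op{Ho}(\cdot)\circ \zg_{\tt M}$ equals the original composite on the nose. The main obstacle is bookkeeping rather than technical: one must carefully distinguish canonical ($\doteq$) from merely natural ($\stackrel{\cong}{\Rightarrow}$) isomorphisms at each step, since the Kan extension is determined up to a \emph{unique} natural isomorphism whereas the strongly universal derived functor is only determined up to some natural isomorphism, and one must refer back to the uniqueness statements of \cite{CompTheo} to keep the two conventions separate without circularity.
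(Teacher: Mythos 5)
The paper itself does not prove Theorem \ref{Fundamental0}: it is imported verbatim from \cite{CompTheo} (``From \cite{CompTheo} we also know''), so there is no internal proof to compare yours against. Your architecture --- Brown's-lemma-style descent through the strict localization, verification of the Kan-extension universal property via the unit induced by $r$, then identification with the strongly universal derived functor --- is the standard route and is presumably close to what \cite{CompTheo} does.

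There is, however, a concrete error in your key step. The comparison transformation you define object-wise as $\zt_{RX}\circ\zg_{\tt N}(G(r_X))^{-1}$ requires $\zg_{\tt N}(G(r_X))$ to be invertible; but $r_X:X\stackrel{\sim}{\rightarrowtail}RX$ is a weak equivalence whose \emph{source} is an arbitrary, not necessarily fibrant, object, so the hypothesis on $G$ gives no control over $G(r_X)$ and $\zg_{\tt N}(G(r_X))$ is in general not an isomorphism. (As written the composite does not even typecheck: $\zg_{\tt N}(G(r_X))^{-1}$ would land in $\zg_{\tt N}G(X)$, whereas $\zt_{RX}$ has source $\zg_{\tt N}G(RX)$.) The correct comparison map is $H(\zg_{\tt M}(r_X))^{-1}\circ\zt_{RX}$: it is $\zg_{\tt M}(r_X)$ that is invertible, being the image in $\tt Ho(M)$ of a weak equivalence under the localization functor. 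A second, smaller gap: $\tilde F$ is not a functor, only an object-wise choice with non-canonical lifts $\tilde Ff$, so ``running the same argument with $\tilde F$'' does not immediately yield a functor $\op{Ho}(\zg_{\tt N}\circ G\circ\tilde F)$; you need the right-homotopy/path-object argument (the one the paper spells out for the canonicity of \eqref{Indeterminacy2}) to see that $\zg_{\tt N}(G(\tilde Ff))$ is independent of the chosen lift and is compatible with composition before you can invoke the universal property of $\tt Ho(M)$.
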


Hence, for every $X\in\tt M\,,$ the value of the derived functor at $\zg_{\tt M}X=X\in\tt Ho(M)$ is \be\label{Indeterminacy}\mathbb{R}^{\op{K}}G(X)\doteq G(\tilde{F}X)\doteq\mathbb{R}_R^{\op{S}}G(X)=G(R X)\cong\mathbb{R}^{\op{S}}G(X)\;,\ee where $\doteq$ denotes a canonical isomorphism in $\tt Ho(N)$ and $\cong$ a not necessarily canonical isomorphism in $\tt Ho(N)\,$.

\begin{rem}
\emph{Since $\mathbb{R}^{\op{K}}G$ (resp., $\mathbb{R}^{\op{S}}G$) is defined up to a canonical natural isomorphism (resp., up to a natural isomorphism) \cite{CompTheo}, the results of \eqref{Fundamental1} are the best possible ones.}
\end{rem}

The next diagram shows that if $FX$ is any fibrant replacement of $X\,,$ there is a lifting $\ell:\tilde{F}X\to FX\,:$

\begin{equation}\label{FibRep}
\begin{tikzpicture}
 \matrix (m) [matrix of math nodes, row sep=3em, column sep=3em]
   {X & & FX   \\
    \tilde{F}X & & \stackrel{}{\ast} \\};
 \path[->]
 (m-1-1) edge [->] node[above]{\small{$\sim$}} node[below]{\small{$f_X$}} (m-1-3)
 (m-1-1) edge [>->] node[left]{\small{$\sim$}} node[right]{\small{$\tilde{f}_X$}} (m-2-1)
 (m-2-1) edge [->>] (m-2-3)
 (m-1-3) edge [->>](m-2-3)
 (m-2-1) edge [->, dashed] node[below] {\small{$\ell$}} (m-1-3);
\end{tikzpicture}
\end{equation}
Since $\ell$ is a weak $\tt M$-equivalence between fibrant objects, its image $G(\ell)$ is a weak $\tt N$-equivalence \be\label{WeqMDF}G(\tilde{F}X)\stackrel{\sim}{\to}G(FX)\ee and the image $\zg_{\tt N}(G(\ell))$ is a $\tt Ho(N)$-isomorphism \be\label{PreIndeterminacy2}G(\tilde{F}X)\cong G(FX)\;.\ee

\begin{prop} The isomorphism \eqref{PreIndeterminacy2} is canonical: \be\label{Indeterminacy2}G(\tilde{F}X)\doteq G(FX)\;.\ee\end{prop}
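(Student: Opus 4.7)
The plan is to show that the isomorphism $\zg_{\tt N}(G(\ell))$ of \eqref{PreIndeterminacy2} does not depend on the choice of lifting $\ell$ in \eqref{FibRep}; since $\tilde{F}X$ and $FX$ are fixed in the statement, this is exactly what canonicity amounts to. I therefore fix two liftings $\ell_1,\ell_2:\tilde{F}X\to FX\,,$ both satisfying $\ell_i\circ\tilde{f}_X=f_X\,,$ and aim to establish $\zg_{\tt N}(G(\ell_1))=\zg_{\tt N}(G(\ell_2))$ in $\tt Ho(N)\,.$

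First I produce a right homotopy between $\ell_1$ and $\ell_2$ via a path object. Using the TrivCof - Fib factorization, factor the diagonal $FX\to FX\times FX$ as a trivial cofibration $\zs:FX\stackrel{\sim}{\rightarrowtail}P(FX)$ followed by a fibration $(p_1,p_2):P(FX)\twoheadrightarrow FX\times FX\,.$ Because $FX$ is fibrant, so is $FX\times FX$ and hence so is $P(FX)\,,$ the composite $P(FX)\twoheadrightarrow FX\times FX\twoheadrightarrow\ast$ being a fibration. By 2-out-of-3 applied to $p_i\circ\zs=\op{id}_{FX}\,,$ both projections $p_1,p_2:P(FX)\to FX$ are weak $\tt M$-equivalences between fibrant objects.

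Second, I solve the lifting problem whose left vertical is the trivial cofibration $\tilde{f}_X:X\rightarrowtail\tilde{F}X\,,$ right vertical the fibration $(p_1,p_2)\,,$ top map $\zs\circ f_X$ and bottom map $(\ell_1,\ell_2)\,.$ The square commutes since both composites equal $(f_X,f_X)\,.$ The lifting axiom yields $H:\tilde{F}X\to P(FX)$ with $p_i\circ H=\ell_i\,,$ i.e.\ a right homotopy between the two liftings.

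Third, I apply $\zg_{\tt N}\circ G\,.$ Since $\zs,p_1,p_2$ are weak $\tt M$-equivalences between fibrant objects and $G$ preserves such maps by hypothesis, $\zg_{\tt N}(G(\zs))\,,\zg_{\tt N}(G(p_1))\,,\zg_{\tt N}(G(p_2))$ are isomorphisms in $\tt Ho(N)\,.$ The relations $p_i\circ\zs=\op{id}_{FX}$ force $\zg_{\tt N}(G(p_i))=\zg_{\tt N}(G(\zs))^{-1}$ for $i=1,2\,,$ and in particular $\zg_{\tt N}(G(p_1))=\zg_{\tt N}(G(p_2))\,.$ Combining this with $p_i\circ H=\ell_i$ yields $\zg_{\tt N}(G(\ell_1))=\zg_{\tt N}(G(p_1))\circ\zg_{\tt N}(G(H))=\zg_{\tt N}(G(p_2))\circ\zg_{\tt N}(G(H))=\zg_{\tt N}(G(\ell_2))\,.$ The one subtle point is the need for $P(FX)$ to be fibrant, without which $G$ would not be guaranteed to send $\zs$ and $p_i$ to weak $\tt N$-equivalences; this is taken care of by applying the path-object construction to the already fibrant $FX$.
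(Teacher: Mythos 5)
Your proof is correct and follows essentially the same route as the paper: both reduce canonicity to independence of the lifting $\ell$, exhibit a right homotopy $\ell_1\simeq^r\ell_2$ through a path object of the fibrant object $FX\,,$ and then use $p_i\circ\zs=\op{id}_{FX}$ together with the hypothesis on $G$ to conclude $\zg_{\tt N}(G(p_1))=\zg_{\tt N}(G(p_2))$ and hence $\zg_{\tt N}(G(\ell_1))=\zg_{\tt N}(G(\ell_2))\,.$ The only cosmetic difference is that you build the homotopy $H$ by solving the lifting problem against the trivial cofibration $\tilde f_X$ explicitly, where the paper instead cites the standard bijection on right homotopy classes induced by composition with $\tilde f_X$ -- your argument is in effect the proof of that cited fact.
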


\begin{proof} Take two different $\tt M$-morphisms $\ell_i:\tilde{F}X\to FX$ ($i\in\{1,2\}$) that render the upper triangle in Diagram \ref{FibRep} commutative, so that $\ell_1\circ\tilde{f}_X=\ell_2\circ\tilde{f}_X\,.$ Since $Y:=FX$ is fibrant and $\tilde{f}_X:X\stackrel{\sim}{\rightarrowtail}\tilde{F}X$ is a trivial cofibration, right composition by $\tilde{f}_X$ induces a 1:1 correspondence between right homotopy classes of morphisms (the result is well known; we gave the proof of its dual in \cite{CompTheo}), we get $\ell_1\simeq^r\ell_2\,.$ This means that $\ell_1\times\ell_2:\tilde{F}X\to Y\times Y$ factors through a path object $\op{Path}(Y)$ of $Y\,,$ i.e., that there is a factorization \be\label{CanIso1}p_1\circ w:=\psi_1\circ p\circ w=\id_Y\quad\text{and}\quad p_2\circ w:=\psi_2\circ p\circ w=\id_Y\;,\ee where $\psi_1,\psi_2:Y\times Y\to Y\,,$ $w:Y\stackrel{\sim}{\to}\op{Path}(Y)$ and $p:\op{Path}(Y)\twoheadrightarrow Y\times Y,$ and a factorization \be\label{CanIso2}p_1\circ K=\ell_1\quad\text{and}\quad p_2\circ K=\ell_2\;,\ee where $K:\tilde{F}X\to \op{Path}(Y)\,.$ From \eqref{CanIso1} it follows that $p_i:\op{Path}(Y)\to Y$ is a weak equivalence between fibrant objects (indeed, since fibrations are closed under pullbacks and compositions, the product of fibrant objects and the path object of a fibrant object are fibrant). If we apply $\zg_{\tt N}\circ G$ to \eqref{CanIso1} and remember that $\zg_{\tt N}(G(p_i))$ is an isomorphism in view of the assumption on $G\,,$ we see that $\zg_{\tt N}(G(w))$ is the inverse isomorphism and that $\zg_{\tt N}(G(p_1))=\zg_{\tt N}(G(p_2))\,.$ It now follows from \eqref{CanIso2} that \be\label{IndLift}\zg_{\tt N}(G(\ell_1))=\zg_{\tt N}(G(\ell_2))\;,\ee so that the $\tt Ho(N)$-isomorphism $\zg_{\tt N}(G(\ell))$ is canonically implemented by the replacements $\tilde{F}X$ and $FX\,.$ \end{proof}

Notice now that if $X,Y\in\tt M$ are related by a zigzag of weak $\tt M$-equivalences, $X\stackrel{\sim}{\to} U\stackrel{\sim}{\leftarrow} V\stackrel{\sim}{\to} Y$ say, it suffices to apply the localization functor $\zg_{\tt M}$ to see that $X$ and $Y$ are isomorphic in $\tt Ho(M)\,.$ The converse is also true:

\begin{prop}
Two objects of a model category $\tt M$ are isomorphic as objects of $\tt Ho(M)$ if and only if they are related by a zigzag of weak equivalences of $\,\tt M\,.$
\end{prop}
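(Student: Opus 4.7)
The forward direction is immediate and was already sketched in the paragraph preceding the statement: the localization functor $\zg_{\tt M}$ sends each weak equivalence to an isomorphism, so applying $\zg_{\tt M}$ to a zigzag $X\stackrel{\sim}{\to}U\stackrel{\sim}{\leftarrow}V\stackrel{\sim}{\to}Y$ produces a chain of isomorphisms and inverses in $\tt Ho(M)$ whose composition is an iso $X\cong Y\,.$ My proof would record this in a single sentence.

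For the converse, the plan is to reduce the general case to that of cofibrant-fibrant objects, where morphisms in $\tt Ho(M)$ are concretely representable. Using the two functorial factorizations guaranteed by the definition of model category adopted in the paper, I would first produce for any $X\in\tt M$ a canonical zigzag
\[
X\stackrel{\sim}{\twoheadleftarrow}QX\stackrel{\sim}{\rightarrowtail}\widehat{X}
\]
of weak equivalences, where $QX$ is a cofibrant C-replacement of $X$ (dual to the fibrant C-replacement used in the excerpt) and $\widehat{X}:=RQX$ a fibrant C-replacement of $QX\,,$ hence a cofibrant-fibrant object. Do the same for $Y\,.$ By the forward direction, $X\cong\widehat{X}$ and $Y\cong\widehat{Y}$ in $\tt Ho(M)\,,$ so the hypothesis $X\cong Y$ in $\tt Ho(M)$ yields an isomorphism $\widehat{X}\cong\widehat{Y}$ in $\tt Ho(M)\,.$

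Now I invoke the classical identification of $\tt Ho(M)$-morphisms between cofibrant-fibrant objects with homotopy classes of $\tt M$-morphisms. Thus the iso $\widehat{X}\cong\widehat{Y}$ is represented by some honest $\tt M$-morphism $f:\widehat{X}\to\widehat{Y}$ with $\zg_{\tt M}(f)$ invertible in $\tt Ho(M)\,.$ The Whitehead-type theorem for model categories (a morphism between cofibrant-fibrant objects is a weak equivalence if and only if its image in the homotopy category is an isomorphism) then guarantees that $f$ is itself a weak equivalence in $\tt M\,.$ Splicing $f$ into the two zigzags built above yields the desired zigzag
\[
X\stackrel{\sim}{\twoheadleftarrow}QX\stackrel{\sim}{\rightarrowtail}\widehat{X}\stackrel{\sim}{\to}\widehat{Y}\stackrel{\sim}{\leftarrowtail}QY\stackrel{\sim}{\twoheadrightarrow}Y\;.
\]

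The only non-formal step is the Whitehead statement together with the description $\tt Ho(M)(A,B)=[A,B]$ for cofibrant-fibrant $A,B\,;$ this is the real content of the proposition. Rather than reproving these facts here, I would simply cite the standard references (e.g.\ \cite{Ho99,Hir}) already used in the paper, since the purpose of this proposition is orientational. The remaining steps are mere bookkeeping with functorial factorizations.
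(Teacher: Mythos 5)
Your proof is correct and follows essentially the same route as the paper: both arguments reduce to the cofibrant-fibrant replacements $RQX$, $RQY$, use the identification $\op{Hom}_{\tt Ho(M)}(X,Y)\cong\op{Hom}_{\tt M}(RQX,RQY)/\!\simeq$ to realize the isomorphism by an honest morphism, and conclude it is a weak equivalence (the paper spells out the Whitehead step by exhibiting the representatives $\zvf,\psi$ as inverse homotopy equivalences, where you cite it) before splicing it into the replacement zigzag. The only cosmetic discrepancy is terminological: the paper calls $Q$ a cofibrant \emph{F}-replacement (the dual of a fibrant C-replacement, with $QX\stackrel{\sim}{\twoheadrightarrow}X$ a trivial fibration), which is what your arrow actually denotes.
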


\begin{proof}
Let $X,Y\in\tt M$ and let $$[\zvf]\in \op{Hom}_{\tt Ho(M)}(X,Y)=\op{Hom}_{\tt M}(RQX,RQY)/\simeq$$ be an isomorphism with inverse $[\psi]\,,$ where $R$ is a fibrant C-replacement functor and $Q$ a cofibrant F-replacement functor. Since $[\psi\circ\zvf]=[\id_{RQX}]$ and $[\zvf\circ\psi]=[\id_{RQY}]\,,$ we conclude that $\zvf$ and $\psi$ are inverse homotopy equivalences between fibrant-cofibrant objects, i.e., that they are weak equivalences. Therefore we have a zigzag of weak equivalences $$X\stackrel{\sim}{\twoheadleftarrow}QX\stackrel{\sim}{\rightarrowtail}RQX\stackrel{\zvf}{\to}RQY\stackrel{\sim}{\leftarrowtail}QY\stackrel{\sim}{\twoheadrightarrow}Y\;.$$
\end{proof}

The previous observations clarify the {\it indeterminacy of a value of a derived functor}:

\begin{rem}
\emph{In view of \eqref{Indeterminacy} and \eqref{Indeterminacy2} the value of a derived functor at an object is well defined only up to isomorphism of the target homotopy category. The isomorphism class is independent of the type of derived functor considered, Kan extension or strongly universal, as well as independent of the type of fibrant C-replacement considered, local or global. Also the choice of another local or another global replacement does not change the isomorphism class. If we compute the value of the derived functor using a local fibrant replacement that is not necessarily a C-replacement, we get again the same class. Finally, the three representatives considered of the value of the derived functor are related by canonical isomorphisms when viewed as objects of $\tt Ho(N)$ and by zigzags of weak equivalences when viewed as objects of $\tt N\,$.}
\end{rem}

\begin{rem}\label{RemNot} \emph{In the following we write $X\approx Y$ if $X$ and $Y$ are related by a zigzag of weak equivalences and we write $X\stackrel{\sim}{\rightleftarrows}Y$ if there is a weak equivalence from $X$ to $Y$ {\it and} a weak equivalence from $Y$ to $X$.} \end{rem}

If we use the notation of Remark \ref{RemNot}, Theorem \ref{Fundamental0} and Equation \eqref{Indeterminacy} imply

\begin{theo}\label{FundamentalCor}
Let $G\in\tt Fun(M,N)$ be a functor between model categories that sends weak $\tt M$-equivalences between fibrant objects to weak $\tt N$-equivalences. For every $X\in\tt M\,,$ the value at $X$ of the right derived functors $$\R^{\op{K}} G,\,\R^{\op{S}}G\in\tt Fun(Ho(M),Ho(N))$$ of $G$ is defined as an object of $\tt N$ up to a zigzag of weak $\tt N$-equivalences and its representatives are computed using a fibrant C-replacement functor $R\,,$ a local fibrant C-replacement $\tilde{F}$ or any local fibrant replacement $F$:
\be\label{Indeterminacy3}\R^{\op{K}}G(X)\approx \R^{\op{S}}G(X)\approx G(RX)\stackrel{\sim}{\rightleftarrows}G(\tilde{F}X)
\stackrel{\sim}{\rightarrow}G(FX)\;.\ee
\end{theo}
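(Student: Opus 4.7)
The strategy is to convert the already established chain of isomorphisms in $\tt Ho(N)$ from \eqref{Indeterminacy} into a chain of zigzags in $\tt N$, and then to refine the specific links that can actually be realized by a single weak equivalence (or a pair of weak equivalences in opposite directions) rather than by an arbitrary zigzag. Everything needed is essentially packaged in Theorem~\ref{Fundamental0}, the equivalence \eqref{Indeterminacy2}, and the proposition stating that an isomorphism in $\tt Ho(M)$ is equivalent to the existence of a zigzag of weak equivalences.

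First, I would reread \eqref{Indeterminacy} and apply the latter proposition to each $\doteq$ and $\cong$ there. This immediately gives $\R^{\op{K}}G(X)\approx G(\tilde{F}X)\approx \R^{\op{S}}_R G(X)=G(RX)\approx \R^{\op{S}}G(X)$, which yields the two $\approx$ signs of \eqref{Indeterminacy3} and also the weaker statement $G(RX)\approx G(\tilde{F}X)$ — but the latter can be upgraded, which is the next step.

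Second, I would upgrade $G(RX)\approx G(\tilde{F}X)$ to $G(RX)\stackrel{\sim}{\rightleftarrows}G(\tilde{F}X)$. Both $RX$ and $\tilde{F}X$ come with trivial cofibrations $r_X:X\stackrel{\sim}{\rightarrowtail}RX$ and $\tilde{f}_X:X\stackrel{\sim}{\rightarrowtail}\tilde{F}X$ and with fibrations to $\ast$, so the lifting axiom produces morphisms $\tilde{F}X\to RX$ and $RX\to\tilde{F}X$ that commute with the replacement maps out of $X$. Applying the 2-out-of-3 axiom to each resulting triangle with vertex $X$ shows that both lifts are weak equivalences; since $RX$ and $\tilde{F}X$ are fibrant, they are weak equivalences between fibrant objects, and by hypothesis $G$ sends them to weak $\tt N$-equivalences in both directions.

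Third, for the final arrow $G(\tilde{F}X)\stackrel{\sim}{\to}G(FX)$, I would invoke Diagram \eqref{FibRep}: the lift $\ell:\tilde{F}X\to FX$ satisfies $\ell\circ\tilde{f}_X=f_X$, so $\ell$ is a weak equivalence by 2-out-of-3, and it is a weak equivalence between fibrant objects (since $FX$ is fibrant and $\tilde{F}X$ is a fibrant C-replacement). The hypothesis on $G$ then gives the weak $\tt N$-equivalence $G(\ell):G(\tilde{F}X)\stackrel{\sim}{\to}G(FX)$. There is no genuine obstacle here; the only subtlety worth flagging is that one must be careful to distinguish the three strengths of relation ($\approx$, $\stackrel{\sim}{\rightleftarrows}$, $\stackrel{\sim}{\to}$) appearing in \eqref{Indeterminacy3} and to justify each by exactly the right construction, rather than collapsing everything to the coarser $\approx$.
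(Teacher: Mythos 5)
Your proposal is correct and follows essentially the same route as the paper's own (very terse) proof: the paper likewise obtains the two $\approx$'s by applying the zigzag proposition to the $\tt Ho(N)$-isomorphisms of \eqref{Indeterminacy}, takes the final arrow $G(\tilde{F}X)\stackrel{\sim}{\to}G(FX)$ from \eqref{WeqMDF}, and produces the two-way weak equivalences between $G(RX)$ and $G(\tilde{F}X)$ by exactly your lifting-plus-2-out-of-3 argument applied in both directions. No gaps.
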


The last weak equivalence in Equation \eqref{Indeterminacy3} is the one of Equation \eqref{WeqMDF}. We get the weak equivalences between $G(RX)$ and $G(\tilde{F}X)$ just as we got the one from $G(\tilde{F}X)$ to $G(FX)\,.$\medskip

The dual versions of the results in this section for left derived functors are also true.

\section{Indeterminacy of a homotopy limit}

If $\tt S$ is a small category and $\tt M$ a model category, the functor category $\tt Fun(S,M)$ admits under mild conditions on the target category $\tt M$ an injective (resp., projective) model structure. The injective weak equivalences and cofibrations are defined as object-wise weak $\tt M$-equivalences and $\tt M$-cofibrations (the injective fibrations are characterized by their right lifting property with respect to the trivial cofibrations). These three classes satisfy the model category axioms, if $\tt M$ is a combinatorial model category. However, this condition is not a necessary one. If the axioms are satisfied, we refer to the resulting model structure as the {\bf injective model structure}. The {\bf projective model} structure is defined dually via object-wise weak equivalences and fibrations. A sufficient condition of existence is that the target category $\tt M$ is cofibrantly generated.\medskip

The constant functor $-^\ast:\tt M\to Fun(S,M)$ is the left adjoint to the limit functor: \be\label{AdjLim}-^\ast:\tt M\rightleftarrows Fun(S,M):\op{Lim}\ee If the injective model structure of $\tt Fun(S,M)$ exists, the constant functor respects cofibrations and trivial cofibrations and the adjunction is therefore a Quillen adjunction. More generally, let $\zs$ be any model structure on $\tt Fun(S,M)$ such that \eqref{AdjLim} is a Quillen adjunction $-^\ast \dashv\op{Lim}\,.$ It follows from Brown's lemma that a right (resp., left) Quillen functor sends weak equivalences between fibrant (resp., cofibrant) objects to weak equivalences, so that its right (resp., left) derived functor exists (see Theorem \ref{Fundamental0}). The left and right derived functors induced in homotopy by adjoint Quillen functors are themselves adjoint functors. The Quillen adjunction \eqref{AdjLim} induces therefore the adjunction $$\mathbb{L}_\zs(-^\ast):{\tt Ho(M)}\rightleftarrows {\tt Ho}({\tt Fun}_{\zs}{\tt (S,M)}):\mathbb{R}_{\zs}\!\op{Lim}\;.$$
This holds in both the case of K and S derived functors.

\begin{defi}\label{HoLimFunDef}
The derived functor $\mathbb{R}_{\zs}\!\op{Lim}$ is referred to as the {\bf homotopy limit functor} (more precisely, the K or the S homotopy limit functor) with respect to the model structure $\zs$ on diagrams.
\end{defi}
From Theorem \ref{FundamentalCor} follows the

\begin{theo}\label{IndeterminacyHoLimTheo}
Let $\,\tt S$ be a small category, let $\tt M$ be a model category and let $\zs$ be a model structure on the category $\tt Fun(S,M)$ of $\tt S$-shaped diagrams in $\tt M$ such that the adjunction \eqref{AdjLim} is a Quillen adjunction $-^\ast\dashv\op{Lim}\,.$ If $X\in\tt Fun(S,M)$ its homotopy limit with respect to $\zs$ is given as an object of $\tt M$ by \be\label{IndeterminacyHoLim}\mathbb{R}_{\zs}\!\op{Lim}(X)\zig\op{Lim}(R_{\zs}X)\stackrel{\sim}{\rightleftarrows}\op{Lim}(\tilde{F}_{\zs}X)
\stackrel{\sim}{\to}\op{Lim}(F_{\zs}X)\;,\ee where $\mathbb{R}_{\zs}\!\op{Lim}(X)$ can be interpreted as K or S derived functor and where $R_{\zs},\tilde{F}_{\zs},F_{\zs}$ are a fibrant C-replacement functor, a local fibrant C-replacement and a local fibrant replacement in the model structure $\zs$ on $\tt Fun(S,M)\,.$ The weak equivalence $\stackrel{\sim}{\to}$ is the universal morphism \be\label{UniMorHoLim}\op{Lim}(\ell_\zs):\op{Lim}(\tilde{F}_\zs X)\stackrel{\sim}{\rightarrow}\op{Lim}(F_\zs X)\ee that is induced by a lifting $\ell_\zs: \tilde{F}_\zs X\Rightarrow F_\zs X$ and its image $\zg_{\tt M}(\op{Lim}(\ell_\zs))$ in homotopy is independent of the lifting considered (see \eqref{IndLift}). A similar remark holds for the weak equivalences $\stackrel{\sim}{\rightleftarrows}\,$.
\end{theo}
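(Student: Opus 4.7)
The plan is to apply Theorem~\ref{FundamentalCor} with $G:=\op{Lim}$, viewed as a functor from $\tt Fun(S,M)$ equipped with the model structure $\zs$ to $\tt M\,.$ Its hypothesis is satisfied: since by assumption $-^\ast\dashv\op{Lim}$ is a Quillen adjunction for $\zs\,,$ the functor $\op{Lim}$ is right Quillen, and Brown's lemma guarantees that it sends weak $\zs$-equivalences between $\zs$-fibrant objects to weak $\tt M$-equivalences. Moreover the derived functor $\mathbb{R}_\zs\!\op{Lim}$ of Definition~\ref{HoLimFunDef} coincides (up to canonical natural isomorphism) with both $\R^{\op{K}}\op{Lim}$ and, up to a natural isomorphism, with $\R^{\op{S}}\op{Lim}\,.$

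The conclusion of Theorem~\ref{FundamentalCor}, read off for $G=\op{Lim}$ and with the local fibrant C-replacement $\tilde{F}_\zs\,,$ the fibrant C-replacement functor $R_\zs$ and any local fibrant replacement $F_\zs$ all taken inside the model structure $\zs$ on $\tt Fun(S,M)\,,$ then produces precisely the chain~\eqref{IndeterminacyHoLim}: the initial $\zig$ records the zigzag of weak $\tt M$-equivalences linking $\R^{\op{K}}\op{Lim}(X)$ and $\R^{\op{S}}\op{Lim}(X)$ to $\op{Lim}(R_\zs X)\,,$ the symbol $\stackrel{\sim}{\rightleftarrows}$ encodes the two weak equivalences between $\op{Lim}(R_\zs X)$ and $\op{Lim}(\tilde{F}_\zs X)$ obtained as images under $\op{Lim}$ of the mutual liftings between these two $\zs$-fibrant C-replacements of $X\,,$ and the final $\stackrel{\sim}{\to}$ is the image under $\op{Lim}$ of a lifting $\ell_\zs:\tilde{F}_\zs X\to F_\zs X$ as in Diagram~\eqref{FibRep}. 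That this image is a weak $\tt M$-equivalence is again a direct consequence of Brown's lemma applied to the right Quillen functor $\op{Lim}\,.$

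It remains to argue that the class $\zg_{\tt M}(\op{Lim}(\ell_\zs))$ in $\tt Ho(M)$ is independent of the chosen lifting. Here I would reproduce, in the model category $({\tt Fun(S,M)},\zs)\,,$ the path-object argument used in the proof of the proposition preceding Theorem~\ref{FundamentalCor}: any two liftings $\ell_{\zs,1},\ell_{\zs,2}:\tilde{F}_\zs X\to F_\zs X$ agree after right composition with the trivial $\zs$-cofibration $\tilde{f}_{\zs,X}\,,$ hence are right homotopic through a $\zs$-path object of the $\zs$-fibrant object $F_\zs X\,;$ applying $\zg_{\tt M}\circ\op{Lim}$ to this factorization and using that $\op{Lim}$ turns the two projections of the path object into weak $\tt M$-equivalences between fibrant objects yields, exactly as in~\eqref{IndLift}, the equality $\zg_{\tt M}(\op{Lim}(\ell_{\zs,1}))=\zg_{\tt M}(\op{Lim}(\ell_{\zs,2}))\,.$ The analogous remark for the double arrows $\stackrel{\sim}{\rightleftarrows}$ is proved verbatim.

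The only mildly delicate point of the whole argument is the bookkeeping: one has to be careful that every replacement, every fibrant object and every path object is interpreted inside the model structure $\zs$ on $\tt Fun(S,M)$ rather than object-wise in $\tt M\,,$ and that each invocation of Brown's lemma is applied to the right Quillen functor $\op{Lim}$ with respect to~$\zs\,.$ Once this is tracked, Theorem~\ref{IndeterminacyHoLimTheo} is a direct corollary of Theorem~\ref{FundamentalCor}.
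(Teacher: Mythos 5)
Your proposal is correct and matches the paper's own route exactly: the paper derives Theorem~\ref{IndeterminacyHoLimTheo} as a direct consequence of Theorem~\ref{FundamentalCor} applied to the right Quillen functor $\op{Lim}$, whose hypothesis is verified by Brown's lemma, and it likewise refers the independence of the lifting back to the path-object argument of \eqref{IndLift}. Nothing is missing.
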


In particular, if the injective model structure exists, we have:
\be\label{IndeterminacyHoLimInj}\mathbb{R}_{\op{inj}}\!\op{Lim}(X)\zig\op{Lim}(R_{\op{inj}}X)\stackrel{\sim}{\rightleftarrows}\op{Lim}(\tilde{F}_{\op{inj}}X)
\stackrel{\sim}{\to}\op{Lim}(F_{\op{inj}}X)\;.\ee

If the projective model structure exists, the dual result holds: $$\label{IndeterminacyHoColim}\mathbb{L}_{\op{proj}}\!\op{Colim}(X)\zig\op{Colim}(Q_{\op{proj}}X)\stackrel{\sim}{\rightleftarrows}\op{Colim}(\tilde{C}_{\op{proj}}X)\stackrel{\sim}{\leftarrow}
\op{Colim}(C_{\op{proj}}X)\;,$$ where $Q_{\op{proj}},\tilde{C}_{\op{proj}}$ and $C_{\op{proj}}$ are cofibrant replacements.

\begin{rem}
\emph{Equation \eqref{IndeterminacyHoLimInj} clarifies the indeterminacy of a small homotopy limit $\mathbb{R}_{\op{inj}}\!\op{Lim}(X)$ viewed as an object of the underlying model category in relation to the chosen definition of derived functors and the chosen replacement of the diagram under consideration. However, if the index category $\tt S$ is an appropriate Reedy category $\tt R\,$, the limit functor is also a right Quillen functor if the diagram category is equipped with its Reedy model structure. This leads to a homotopy limit $\mathbb{R}_{\op{Ree}}\!\op{Lim}(X)$ with respect to the Reedy structure and thus to another possible indeterminacy.}
\end{rem}

In the remainder of this section, we recall the results on Reedy categories and Reedy model structures that we need in the next section to explore the indeterminacy just mentioned. The exact understanding of all the indeterminacies is a prerequisite for the new approach to model categorical homotopy fiber sequences that we detail in \cite{NAHFS}.\medskip

If $\tt R$ is a Reedy category and $\tt M$ {\it any} model category, the functor category ${\tt Fun}({\tt R},{\tt M})$ can be equipped with a {\bf Reedy model structure}.\medskip

Reedy categories are defined using direct and inverse categories which are particularly simple examples of Reedy categories \cite{DHK}. A {\bf direct category} (resp., an {\bf inverse category}) is a small category that comes with a map $\op{deg}$ from objects to ordinals such that every non-identity morphism $r\to s$ raises (resp., lowers) the degree: $\op{deg} r<\op{deg} s$ (resp., $\op{deg} r>\op{deg} s$). A {\bf Reedy category} is a small category $\tt R$ together with two subcategories ${\tt R}_+$ and ${\tt R}_-$ which contain all the objects, and a map $\op{deg}$ from objects to ordinals such that:
\begin{enumerate}
  \item every $\tt R$-morphism factors uniquely into an ${\tt R}_-$-morphism and an ${\tt R}_+$-morphism,
  \item every non-identity ${\tt R}_-$-morphism lowers the degree,
  \item every non-identity ${\tt R}_+$-morphism raises the degree.
\end{enumerate}
For instance, the category ${\tt I}:=\{c\to d\leftarrow b\}$ is a direct category when equipped with degree map $\op{deg}_1$ defined by $\{0\to 2\leftarrow 1\}$, it is an inverse category for the degree map $\op{deg}_2$ defined by $\{1\to 0\leftarrow 2\}$ and it is a non-trivial Reedy category for $\op{deg}_3$ and $\op{deg}_4$ given by $\{0\to 1\leftarrow 2\}$ and $\{2\to 1\leftarrow 0\}\,.$\medskip

For every $X\in\tt Fun(R,M)$ and every $r\in\tt R$ one defines
\begin{enumerate}
  \item the {\bf latching object} $L_rX$ of $X$ at $r$ as the colimit $$L_rX:=\op{Colim}({\tt R}_{+r}\stackrel{\op{For}}{\to} {\tt R}\stackrel{X}{\to} {\tt M})=\op{Colim}(\op{For}^*X)\in{\tt M}\;,$$ where ${\tt R}_{+r}$ is the full subcategory of the over-category ${\tt R}_+\downarrow r$ (of all ${\tt R}_+$-morphisms with codomain $r$) that contains all the objects except the identity of $r\,,$ where $\op{For}$ is the forgetful functor, and where $\op{For}^*$ is the pullback functor,
  \item the {\bf matching object} $M_rX$ of $X$ at $r$ as the limit $$M_rX:=\op{Lim}({\tt R}^r_{-}\stackrel{\op{Rof}}{\to} {\tt R}\stackrel{X}{\to} {\tt M})=\op{Lim}(\op{Rof}^*X)\in{\tt M}\;,$$ where ${\tt R}^r_{-}$ is the full subcategory of the under-category $r\downarrow{\tt R}_-$ (of all ${\tt R}_-$-morphisms with domain $r$) that contains all the objects except the identity of $r\,,$ where $\op{Rof}$ is the forgetful functor, and where $\op{Rof}^*$ is the pullback functor.
  \end{enumerate}
For instance, let $r'\to r$ and $r''\to r$ be two non-identity morphisms of ${\tt R}_+$ and let $r'\to r''$ be a morphism of ${\tt R}_+$ that makes the resulting triangle commutative:

\begin{equation}
\begin{tikzpicture}
\matrix (m) [matrix of math nodes, row sep=2em, column sep=1.6em]
    {r' &  & r''     \\
			 & r &  \\
};
\path[->] (m-1-1)  edge[pil] (m-2-2) ;
\path[->] (m-1-3)  edge[pil] (m-2-2) ;
\path[->] (m-1-1)  edge[pil] (m-1-3) ;
\end{tikzpicture}
\end{equation}
\noindent The functor $\op{For}$ sends this morphism of ${\tt R}_{+r}$ to $r'\to r''$ and the functor $X$ sends the latter to $X_{r'}\to X_{r''}\,:$

\begin{equation}
\begin{tikzpicture}
\matrix (m) [matrix of math nodes, row sep=2em, column sep=1.6em]
    {X_{r'} &  & X_{r''}     \\
			 & L_rX &  \\
			& X_r  &   \\
			};
\path[->] (m-1-1) edge[pil,bend right=30] (m-3-2); 
\path[->] (m-1-3) edge[pil,bend left=30]  (m-3-2);
\path[->] (m-1-1)  edge[pil] (m-2-2) ;
\path[->] (m-1-3)  edge[pil] (m-2-2) ;
\path[dashed,->] (m-2-2)  edge[pil] (m-3-2) ; 
\path[->] (m-1-1)  edge[pil] (m-1-3) ;
\end{tikzpicture}
\end{equation}
\noindent The case of the matching object is dual, so that the universal {\bf latching morphism} $\ell_r(X):L_rX\to X_r$ in the previous diagram gets replaced by the {\bf matching morphism} $m_r(X):X_r\to M_rX\,.$ For every $r\in{\tt R}\,,$ we actually have a {\bf latching functor} and a {\bf latching natural transformation}  \be\label{LFLNT}L_r=\op{Colim}\circ\op{For}^*\in{\tt Fun(Fun(R,M),M)}\quad\text{and}\quad\ell_r:L_r\Rightarrow -_r\;,\ee where $-_r:{\tt Fun(R,M)}\to{\tt M}$ is the evaluation functor. Dually, we get a {\bf matching functor} and a {\bf matching natural transformation} \be\label{MFMNT}M_r=\op{Lim}\circ\op{Rof}^*\in{\tt Fun(Fun(R,M),M)}\quad\text{and}\quad m_r:-_r\Rightarrow M_r\;.\ee Hence, if $X,Y\in\tt Fun(R,M)$ and $f:X\Rightarrow Y\,,$ the following squares commute:

\begin{equation}
\begin{tikzpicture}
 \matrix (m) [matrix of math nodes, row sep=3em, column sep=3em]
   {L_rX & & X_r & & M_rX \\
    L_rY & & Y_r & & M_rY \\};
 \path[->]
 (m-1-1) edge [->] node[auto] {\small{$\;\;\ell_r(X)$}} (m-1-3)
 (m-1-3) edge [->] node[auto] {\small{$\;\;m_r(X)$}} (m-1-5)
 (m-2-1) edge [->] node[below] {\small{$\;\;\ell_r(Y)$}} (m-2-3)
 (m-2-3) edge [->] node[below] {\small{$\;\;m_r(Y)$}} (m-2-5)
 (m-1-1) edge [->] node[left] {\small{$L_rf$}} (m-2-1)
 (m-1-3) edge [->] node[left] {\small{$f_r$}}(m-2-3)
 (m-1-5) edge [->] node[right] {\small{$M_rf$}}(m-2-5);
\end{tikzpicture}
\end{equation}

The distinguished morphism classes of the Reedy model structure of $\tt Fun(R,M)$ that we mentioned above are defined as follows \cite{DHK}. The {\bf Reedy weak equivalences} are defined object-wise and are therefore the same as for the projective and injective model structures. The {\bf Reedy cofibrations} are the natural transformations $f:X\Rightarrow Y$ such that the induced universal $\tt M$-morphism \be\label{ReeCof}X_r\amalg_{L_rX}L_rY\to Y_r\ee is a cofibration for every $r\in\tt R\,.$ The {\bf Reedy fibrations} are the natural transformations $f:X\Rightarrow Y$ such that the induced universal $\tt M$-morphism \be\label{ReeFib}X_r\to Y_r\times_{M_rY}M_r X\ee is a fibration for every $r\in\tt R\,.$\medskip

If the Reedy category $\tt R$ is a direct category, its subcategory ${\tt R}_+$ is the full category ${\tt R}$ and its subcategory ${\tt R}_-$ is the discrete category that contains all the objects $r\in\tt R\,$. In this case the full subcategory ${\tt R}^r_{-}$ is the empty category, the functor $\op{Rof}^*X$ is the empty diagram and $M_rX$ is the terminal object $\ast$ of $\tt M$ for all $X\,.$ It follows from \eqref{ReeFib} that the Reedy fibrations are exactly the object-wise fibrations. Therefore the Reedy model structure of $\tt Fun(R,M)$ is the projective model structure. The dual result is also true:

\begin{rem}\label{RDI} \emph{If the Reedy category $\tt R$ is a direct (resp., an inverse) category, the Reedy model structure of $\,\tt Fun(R,M)$ is the projective (resp., the injective) model structure.}\end{rem}

If $\tt R$ is any Reedy category and {\bf $\tt M$ is a combinatorial model category}, hence a cofibrantly generated one, so that all three model structures exist, the Reedy model structure lies between the projective and the injective structures. More precisely, the identity functor $\id$ of $\tt Fun(R,M)$ is a left Quillen equivalence from the projective model structure to the Reedy model structure and from the Reedy structure to the injective one and a right Quillen equivalence in the other direction \cite{JL}:
\be\label{QuiEqu-s}\id: {\tt Fun}_{\op{proj}}{\tt (R,M)}\rightleftarrows{\tt Fun}_{\op{Reedy}}{\tt(R,M)}\rightleftarrows{\tt Fun}_{\op{inj}}{\tt (R,M)}:\id\;.\ee

\begin{cor}\label{QuiEquCor}
Any projective cofibration is a Reedy cofibration and any Reedy cofibration is an object-wise cofibration. Dually, any injective fibration is a Reedy fibration and any Reedy fibration is an object-wise fibration.
\end{cor}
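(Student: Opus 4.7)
The plan is to read off the corollary directly from the Quillen equivalences displayed in \eqref{QuiEqu-s}, using only the defining properties of a Quillen adjunction. Recall that the left adjoint of a Quillen pair preserves cofibrations and the right adjoint preserves fibrations. In \eqref{QuiEqu-s} both adjoints are the identity functor, so once the pair is granted to be a Quillen adjunction, the statement becomes a tautological unpacking of what it means for a morphism to lie in one of the distinguished classes.

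For the first half, I would argue as follows. Since $\id:{\tt Fun}_{\op{proj}}{\tt(R,M)}\rightarrow{\tt Fun}_{\op{Reedy}}{\tt(R,M)}$ is a left Quillen functor, it preserves cofibrations; in other words, every natural transformation $f:X\Rightarrow Y$ that is a projective cofibration also satisfies the Reedy cofibration condition \eqref{ReeCof}. Applying the analogous argument to the second adjunction in \eqref{QuiEqu-s} shows that every Reedy cofibration is an injective cofibration, and by the very definition of the injective model structure an injective cofibration is an object-wise $\tt M$-cofibration. Composing the two inclusions gives the first sentence of the corollary.

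The second half I would obtain by reading \eqref{QuiEqu-s} in the reverse direction: $\id$ is then a right Quillen functor from ${\tt Fun}_{\op{inj}}{\tt(R,M)}$ to ${\tt Fun}_{\op{Reedy}}{\tt(R,M)}$ and from ${\tt Fun}_{\op{Reedy}}{\tt(R,M)}$ to ${\tt Fun}_{\op{proj}}{\tt(R,M)}$, so it preserves fibrations at each stage. Hence every injective fibration satisfies the Reedy fibration condition \eqref{ReeFib}, and every Reedy fibration is a projective fibration, which by definition is object-wise.

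There is essentially no obstacle: the whole argument is a formal consequence of \eqref{QuiEqu-s}. The only point worth flagging, in order to avoid a potential misreading, is that these inclusions are in general strict, so the corollary records a genuine comparison between three distinct model structures rather than an identification of them.
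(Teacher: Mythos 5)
Your argument is correct and is exactly how the paper intends the corollary to be read: it is stated as an immediate consequence of the Quillen adjunctions \eqref{QuiEqu-s}, with the identity functor preserving cofibrations as a left Quillen functor and fibrations as a right Quillen functor, combined with the object-wise definitions of injective cofibrations and projective fibrations. No further comment is needed.
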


\section{Indeterminacy of a homotopy pullback}

Let $\tt M$ now be any model category and let $\tt R$ be the inverse Reedy category ${\tt I}_2$ whose underlying category is ${\tt I}:=\{c\to d\leftarrow b\}$ and whose degree map is the above-mentioned map $\op{deg}_2$ defined by $\{1\to 0\leftarrow 2\}\,.$ The objects $X$ of the functor category $${\tt M}^{{\tt I}}:=\tt Fun(I,M)$$ are the $\tt M$-cospans $C\to D\leftarrow B$ and its morphisms $f:X\Rightarrow Y$ are the corresponding adjacent commutative squares

\begin{equation}
\begin{tikzpicture}
 \matrix (m) [matrix of math nodes, row sep=3em, column sep=2.5em]
   {C & & D & & B \\
    C' & & D' & & B' \\};
 \path[->]
 (m-1-1) edge [->] (m-1-3)
 (m-1-3) edge [<-] (m-1-5)
 (m-2-1) edge [->] (m-2-3)
 (m-2-3) edge [<-] (m-2-5)
 (m-1-1) edge [->] node[left] {\small{$f_c$}} (m-2-1)
 (m-1-3) edge [->] node[left] {\small{$f_d$}}(m-2-3)
 (m-1-5) edge [->] node[right] {\small{$f_b$}}(m-2-5);
\end{tikzpicture}
\end{equation}
In view of Remark \ref{RDI} the Reedy model structure on ${\tt M}^{{\tt I}_2}$ is the injective model structure of $\tt M^I$ with object-wise weak equivalences and cofibrations. Further, a natural transformation $f:X\Rightarrow Y$ is an injective fibration if and only if Condition \eqref{ReeFib} is satisfied. It follows from the definition of matching objects of objects $X\in{\tt M}^{{\tt I}_2}$ that $M_bX=D,\, M_cX=D$ and $M_dX=\ast\,,$ so that $f$ is an injective fibration if and only if the induced universal $\tt M$-morphisms are fibrations: $$\label{InjFibCos}B\twoheadrightarrow B'\times_{D'}D\,,\quad C\twoheadrightarrow C'\times_{D'}D\,,\quad D\twoheadrightarrow D'\;.$$ In particular:

\begin{prop}
For any model category $\tt M\,$, the injective model structure on the category of $\tt M$-cospans exists. Moreover, an $\tt M$-cospan $C\to D\leftarrow B$ is injectively fibrant if and only if $D$ is a fibrant object of $\,\tt M$ and both arrows are fibrations of $\,\tt M:$ \be\label{InjFib}C\twoheadrightarrow D_{\op{f}}\twoheadleftarrow B\;.\ee
\end{prop}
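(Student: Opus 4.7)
The plan is to deduce both assertions directly from the Reedy machinery recalled in the previous section, specialized to the inverse Reedy category ${\tt I}_2$. Since ${\tt I}_2$ is an inverse category, Remark \ref{RDI} identifies its Reedy model structure on ${\tt Fun}({\tt I},{\tt M})={\tt M}^{\tt I}$ with the injective model structure, and the Reedy model structure exists on ${\tt Fun}({\tt R},{\tt M})$ for \emph{any} Reedy category ${\tt R}$ and any model category ${\tt M}$. No cofibrant generation or combinatoriality of ${\tt M}$ is required, which is precisely what rescues the existence statement here; applied to ${\tt R}={\tt I}_2$ it yields the first claim.

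For the characterization of injectively fibrant objects, I would apply the general criterion \eqref{ReeFib}, in the form displayed just before the statement, to the unique morphism $t_X:X\Rightarrow \ast$, where $\ast$ denotes the terminal cospan $\ast\to\ast\leftarrow\ast$ of ${\tt M}^{\tt I}$. Writing the target as $C'\to D'\leftarrow B'$ with $C'=D'=B'=\ast$, the relevant pullbacks collapse: $B'\times_{D'}D\doteq D$ and $C'\times_{D'}D\doteq D$, while $D\twoheadrightarrow D'=\ast$ is tautologically the statement that $D$ is fibrant in ${\tt M}$. The three conditions therefore reduce to
\benn
C\twoheadrightarrow D,\qquad B\twoheadrightarrow D,\qquad D\twoheadrightarrow \ast,
\eenn
which is exactly the asserted factorization $C\twoheadrightarrow D_{\op{f}}\twoheadleftarrow B$. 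Conversely, any cospan satisfying these three conditions fulfills the injective fibration criterion applied to $t_X$, hence is injectively fibrant.

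There is essentially no obstacle: the only point that has to be invoked is the computation of the matching objects $M_bX=D,\;M_cX=D,\;M_dX=\ast$ for $X=(C\to D\leftarrow B)$, which is already recorded in the paragraph immediately preceding the proposition and is what makes the criterion \eqref{ReeFib} concrete in this case. Once those values are in hand, the proposition is a direct specialization of the Reedy fibration criterion for the inverse Reedy category ${\tt I}_2$ and the terminal cospan as codomain.
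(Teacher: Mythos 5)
Your argument is correct and is essentially the paper's own: the paper likewise invokes Remark \ref{RDI} to identify the Reedy model structure on ${\tt M}^{{\tt I}_2}$ (which exists for any model category) with the injective structure, computes $M_bX=D$, $M_cX=D$, $M_dX=\ast$, and specializes the fibration criterion \eqref{ReeFib} to the map to the terminal cospan to obtain $C\twoheadrightarrow D_{\op{f}}\twoheadleftarrow B$. No discrepancies to report.
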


If ${\tt I}_3$ is the non-trivial Reedy category ${\tt I}=\{c\to d\leftarrow b\}$ with degree map $\op{deg}_3$ defined by $\{0\to 1\leftarrow 2\}\,,$ the computation of the Reedy fibrations is the same as in the case of ${\tt I}_2\,,$ except that $M_cX=\ast\,,$ so that $f$ is a fibration of the Reedy model structure $\op{Ree}_{\op{I}}$ defined by the increasing labelling $\{0\to 1\leftarrow 2\}$ if and only if $$\label{Ree3FibCos}B\twoheadrightarrow B'\times_{D'}D\,,\quad C\twoheadrightarrow C'\,,\quad D\twoheadrightarrow D'\;.$$ Dually, a natural transformation $f$ is a cofibration of the Reedy model structure $\op{Ree}_{\op{I}}$ if and only if \be\label{Ree3CoFibCos}B\rightarrowtail B'\,,\quad C\rightarrowtail C'\,,\quad D\amalg_CC'\rightarrowtail D'\;.\ee In particular:

\begin{prop}
For any model category $\tt M\,,$ an $\tt M$-cospan $C\to D\leftarrow B$ is fibrant for the Reedy model structure $\op{Ree}_{\op{I}}$ defined by the increasing labelling $\{0\to 1\leftarrow 2\}\,$ if and only if $C$ and $D$ are fibrant objects of $\,\tt M$ and the second arrow is a fibration of $\,\tt M:$ \be\label{ReeIFib}C_{\op{f}}\rightarrow D_{\op{f}}\twoheadleftarrow B\;.\ee
\end{prop}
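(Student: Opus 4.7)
The plan is to apply the cofibration/fibration characterization \eqref{Ree3FibCos} given immediately above the statement, specialized to the unique morphism from $X = (C\to D\leftarrow B)$ to the terminal object of ${\tt M}^{\tt I}\,$, namely the constant cospan $\ast\to\ast\leftarrow\ast$. By definition, $X$ is $\op{Ree}_{\op{I}}$-fibrant precisely when this unique morphism is a $\op{Ree}_{\op{I}}$-fibration.

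First I would substitute $C'=D'=B'=\ast$ into the three conditions of \eqref{Ree3FibCos}. The middle condition $C\twoheadrightarrow C'$ becomes $C\twoheadrightarrow \ast$, i.e.\ $C$ is $\tt M$-fibrant. The third condition $D\twoheadrightarrow D'$ becomes $D\twoheadrightarrow \ast$, i.e.\ $D$ is $\tt M$-fibrant. For the first condition, I would compute the pullback $B'\times_{D'}D=\ast\times_\ast D\,$; since $\tt M$ admits all small limits and the terminal object is absorbing in such a pullback, this pullback is canonically isomorphic to $D\,$, so the first condition becomes $B\twoheadrightarrow D\,$, i.e.\ the arrow $B\to D$ is an $\tt M$-fibration.

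Conversely, if $C$ and $D$ are $\tt M$-fibrant and $B\to D$ is an $\tt M$-fibration, then the three conditions of \eqref{Ree3FibCos} are automatically satisfied for the morphism $X\to (\ast\to\ast\leftarrow\ast)\,$, so $X$ is $\op{Ree}_{\op{I}}$-fibrant. Note that no further hypothesis is required on the arrow $C\to D$; in particular it need not be a fibration, which is the asymmetry reflected in the notation \eqref{ReeIFib}.

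There is no real obstacle: the proof is a direct bookkeeping application of the general Reedy fibration criterion to the terminal object, combined with the trivial identification of the pullback over a terminal object. The only mild subtlety is recognizing that the degree map $\op{deg}_3$ assigns the higher degree to $b$ than to $c\,$, which breaks the symmetry between $B$ and $C$ and explains why only $B\to D$ must be a fibration while $C\to D$ is unconstrained beyond the fibrancy of its endpoints.
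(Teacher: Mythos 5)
Your proposal is correct and follows essentially the same route as the paper: the proposition is stated there as an immediate consequence ("In particular") of the displayed characterization of $\op{Ree}_{\op{I}}$-fibrations, obtained exactly as you do by specializing that characterization to the unique morphism to the terminal cospan $\ast\to\ast\leftarrow\ast$ and identifying $\ast\times_\ast D\cong D$. Your closing remark about the asymmetry coming from $\op{deg}_3$ placing $b\to d$ in ${\tt I}_-$ and $c\to d$ in ${\tt I}_+$ is also the correct explanation of why only the second arrow need be a fibration.
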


If ${\tt M}^{{\tt I}}$ is equipped with its Reedy structure $\op{Ree}_{\op{I}}\,,$ the constant functor \be\label{AdjLimReeI}-^\ast:{\tt M}\rightleftarrows {\tt Fun}_{\op{Ree}_{\op{I}}}({\tt I},{\tt M}):\op{Lim}\ee preserves cofibrations. Indeed, the image by $-^\ast$ of an $\tt M$-morphism $m:E\to E'$ is the commutative diagram
\begin{equation}
\begin{tikzpicture}
 \matrix (m) [matrix of math nodes, row sep=3em, column sep=2.5em]
   {E & & E & & E \\
    E' & & E' & & E' \\};
 \path[->]
 (m-1-1) edge [->] node[above] {\small{$\id_E$}} (m-1-3)
 (m-1-3) edge [<-] node[above] {\small{$\id_E$}} (m-1-5)
 (m-2-1) edge [->] node[above] {\small{$\id_{E'}$}}(m-2-3)
 (m-2-3) edge [<-] node[above] {\small{$\id_{E'}$}} (m-2-5)
 (m-1-1) edge [->] node[left] {\small{$m$}} (m-2-1)
 (m-1-3) edge [->] node[left] {\small{$m$}}(m-2-3)
 (m-1-5) edge [->] node[right] {\small{$m$}}(m-2-5);
\end{tikzpicture}
\end{equation}
and if $m:E\rightarrowtail E'\,,$ this diagram is a cofibration of $\op{Ree}_{\op{I}}$ if and only if the conditions \eqref{Ree3CoFibCos} are satisfied, i.e., if and only if the universal morphism $u:E\amalg_EE'\to E'$ induced by $m$ is a cofibration in $\tt M\,.$ One easily sees that $u=\id_{E'}\,,$ so the previous diagram is indeed a cofibration of $\op{Ree}_{\op{I}}\,.$ As weak equivalences are defined object-wise in any Reedy structure, the constant functor preserves also trivial cofibrations and therefore the adjunction \eqref{AdjLimReeI} is a Quillen adjunction. The right adjoint functor of the resulting adjunction in homotopy $$\mathbb{L}_{\op{Ree}_{\op{I}}}(-^\ast):\tt Ho(M)\rightleftarrows Ho(Fun_{\op{Ree}_{\op{I}}}(I,M)):\mathbb{R}_{\op{Ree}_{\op{I}}}\!\op{Lim}\;$$ is the K or the S homotopy limit functor with respect to the Reedy model structure $\op{Ree}_{\op{I}}$ (Definition \ref{HoLimFunDef}).\medskip

Similarly, if ${\tt I}_4$ is the Reedy category ${\tt I}=\{c\to d\leftarrow b\}$ with degree map $\op{deg}_4$ defined by the decreasing labelling $\{2\to 1\leftarrow 0\}\,,$ we get the

\begin{prop}
For any model category $\tt M\,,$ an $\tt M$-cospan $C\to D\leftarrow B$ is fibrant for the Reedy model structure $\op{Ree}_{\op{D}}$ defined by the decreasing labelling $\{2\to 1\leftarrow 0\}$ if and only if $D$ and $B$ are fibrant objects of $\,\tt M$ and the first arrow is a fibration of $\,\tt M\,:$ \be\label{ReeDFib}C\twoheadrightarrow D_{\op{f}}\leftarrow B_{\op{f}}\;.\ee
\end{prop}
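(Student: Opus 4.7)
The plan is to characterize $\op{Ree}_{\op{D}}$-fibrant cospans by applying the Reedy fibration condition \eqref{ReeFib} to the unique morphism $X\Rightarrow \ast^\ast$ from the cospan $X=(C\to D\leftarrow B)$ to the terminal cospan. Since the constant cospan at the terminal object has $M_r(\ast^\ast)=\ast$ for every $r\in{\tt I}_4$, this reduces the fibrancy of $X$ to the requirement that the matching morphisms $m_r(X)\colon X_r\to M_rX$ be fibrations of $\tt M$ for each $r\in\{c,d,b\}$, exactly as in the proof of the preceding proposition for $\op{Ree}_{\op{I}}$.

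Next I identify the subcategories $({\tt I}_4)_-$ and $({\tt I}_4)_+$ determined by $\op{deg}_4\colon \{c,d,b\}\mapsto\{2,1,0\}$. Since $\op{deg}_4(c)=2>1=\op{deg}_4(d)$, the morphism $c\to d$ lowers the degree and therefore lies in $({\tt I}_4)_-\,,$ while $b\to d$ raises the degree and lies in $({\tt I}_4)_+\,.$ In short, the roles of the two arrows are the opposite of those in ${\tt I}_3\,,$ which reflects the expected duality between the labellings $\{0\to 1\leftarrow 2\}$ and $\{2\to 1\leftarrow 0\}\,.$

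Then I compute the three matching objects. For $r=c$ the under-category ${\tt I}_4^c{}_-$ is the one-object category given by the non-identity morphism $c\to d$, the functor $\op{Rof}^*X$ is the constant diagram at $D$, and so $M_cX=D\,.$ For $r=d$ the only $({\tt I}_4)_-$-morphism with domain $d$ is $\id_d\,,$ so ${\tt I}_4^d{}_-$ is the empty category and $M_dX=\ast\,.$ For $r=b$ the only $({\tt I}_4)_-$-morphism with domain $b$ is $\id_b$ (because $b\to d$ belongs to $({\tt I}_4)_+$), so again $M_bX=\ast\,.$

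Substituting into \eqref{ReeFib} with $Y=\ast^\ast\,,$ the fibrancy conditions at $c$, $d$ and $b$ become respectively $C\twoheadrightarrow D\,,$ $D\twoheadrightarrow\ast$ and $B\twoheadrightarrow\ast\,,$ which is precisely the statement that the first arrow is a fibration and both $D$ and $B$ are fibrant, yielding \eqref{ReeDFib}. No real obstacle arises: the argument is routine bookkeeping on over-categories, and the only point requiring care is to correctly detect which arrow of ${\tt I}$ sits in $({\tt I}_4)_-$ and which in $({\tt I}_4)_+\,.$
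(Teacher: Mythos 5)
Your proof is correct and follows essentially the same route the paper takes (the paper leaves this case implicit, saying only that it is obtained ``similarly'' to the ${\tt I}_2$ and ${\tt I}_3$ computations): you correctly place $c\to d$ in $({\tt I}_4)_-$ and $b\to d$ in $({\tt I}_4)_+$, compute $M_cX=D$, $M_dX=M_bX=\ast$, and read off the fibrancy conditions $C\twoheadrightarrow D$, $D\twoheadrightarrow\ast$, $B\twoheadrightarrow\ast$ from the Reedy fibration condition applied to the map to the terminal cospan. Nothing to add.
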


Moreover, just as in the case of $\op{Ree}_{\op{I}}\,,$ there is a K and an S homotopy limit functor $\mathbb{R}_{\op{Ree}_{\op{D}}}\!\op{Lim}$ with respect to $\op{Ree}_{\op{D}}\,$.

\begin{defi}
Let $\tt I$ be the category $\{c\to d\leftarrow b\}\,,$ let $\tt M$ be a model category and let $\zs$ be a model structure on the category $\tt Fun(I,M)$ of cospans of $\,\tt M$ such that the adjunction \eqref{AdjLim} is a Quillen adjunction $-^\ast\dashv\op{Lim}.$ In particular $\zs\in\{\op{inj},\op{Ree}_{\op{I}},\op{Ree}_{\op{D}}\}\,.$ For every $\,\tt M$-cospan $X=\{C\to D\leftarrow B\}\,,$ its homotopy limit with respect to $\zs$ \be\label{IndeterminacyHoLimSigma}\mathbb{R}_{\zs}\!\op{Lim}(X)\zig\op{Lim}(R_{\zs}X)\stackrel{\sim}{\rightleftarrows}\op{Lim}(\tilde{F}_{\zs}X)
\stackrel{\sim}{\to}\op{Lim}(F_{\zs}X)\ee (Theorem \ref{IndeterminacyHoLimTheo}) is referred to as the {\bf homotopy pullback} of $X$ with respect to $\zs$ and it is denoted $$B\times_D^{h_\zs} C:=\mathbb{R}_{\zs}\!\op{Lim}(C\to D\leftarrow B)\;.$$
\end{defi}

If $F_{\zs 1}X$ and $F_{\zs 2}X$ are two fibrant replacements of $X$ in $\zs\,,$ it follows from \eqref{IndeterminacyHoLimSigma} that there is a span of weak equivalences $$\op{Lim}(F_{\zs 1}X)\stackrel{\sim}{\leftarrow}\cdot \stackrel{\sim}{\to}\op{Lim}(F_{\zs 2}X)\;.$$ If $\zs=\op{Ree}_{\op{I}}\,,$ we get in particular \be\label{HoPullReeI}B\times_D^{h_{\op{Ree}_{\op{I}}}} C\approx \op{Lim}(G\twoheadrightarrow H_{\op{f}}\twoheadleftarrow E)\stackrel{\sim}{\leftarrow}\cdot \stackrel{\sim}{\to}\op{Lim}(L_{\op{f}}\to M_{\op{f}}\twoheadleftarrow K)\;,\ee for every cospans to which $X$ is weakly equivalent. Similarly, if $\zs=\op{Ree}_{\op{D}}\,,$ we obtain \be\label{HoPullReeD}B\times_D^{h_{\op{Ree}_{\op{D}}}} C\approx \op{Lim}(G\twoheadrightarrow H_{\op{f}}\twoheadleftarrow E)\stackrel{\sim}{\leftarrow}\cdot \stackrel{\sim}{\to}\op{Lim}(P\twoheadrightarrow S_{\op{f}}\leftarrow N_{\op{f}})\;,\ee whenever the cospans considered are replacements of $X\,$. Since the first replacement in the last two equations is also fibrant if $\zs=\op{inj}\,,$ we have \be\label{HoPullInj}B\times_D^{h_{\op{inj}}} C\approx \op{Lim}(G\twoheadrightarrow H_{\op{f}}\twoheadleftarrow E)\;.\ee

Of course, the standard limit of a cospan is its standard pullback.\medskip

What we said above leads to the next proposition which deals with all of the possible indeterminacies in homotopy pullbacks (see \eqref{HoPullReeI}, \eqref{HoPullReeD}, \eqref{HoPullInj}).

\begin{rem}
\emph{In every model category $\tt M$ the homotopy pullback of a cospan with respect to $\zs\in\{\op{inj},\op{Ree}_{\op{I}},\op{Ree}_{\op{D}}\}$ is well defined as an isomorphism class of objects of $\,\tt Ho(M)\,,$ but is only defined up to a zigzag of weak equivalences if it is viewed as an object of $\tt M\,.$ All types of fibrant replacement (fibrant C-replacement functor, local fibrant C-replacement, or just any fibrant replacement) provide representatives of the $\zs$-homotopy pullback considered, and this for both interpretations of the homotopy pullback (Kan extension derived functor or strongly homotopy derived functor). In addition, \emph{we can regard} the representatives of a homotopy pullback for the three model structures on cospans (injective model structure, Reedy model structure defined by the increasing labelling, or Reedy model structure defined by the decreasing labelling) as being the same. \emph{In this sense} the homotopy pullback is independent of the model structure on cospans.}
\end{rem}

\begin{theo}\label{IndHoPull}
The homotopy pullback of a cospan in a model category is independent of the type of derived functor and of the model structure $$\zs\in\{\op{inj},\op{Ree}_{\op{I}},\op{Ree}_{\op{D}}\}$$ on cospan diagrams considered. We get canonical representatives of the homotopy pullback from the standard pullback of weakly equivalent cospans with three fibrant objects and at least one morphism that is a fibration: more precisely, if in the adjacent commutative squares
\begin{equation}
\begin{tikzpicture}
 \matrix (m) [matrix of math nodes, row sep=3em, column sep=2.5em]
   {C & & D & & B \\
    C' & & D' & & B' \\};
 \path[->]
 (m-1-1) edge [->] (m-1-3)
 (m-1-3) edge [<-] (m-1-5)
 (m-2-1) edge [->] (m-2-3)
 (m-2-3) edge [<-] (m-2-5)
 (m-1-1) edge [->] node[left] {\small{$\sim$}} (m-2-1)
 (m-1-3) edge [->] node[left] {\small{$\sim$}}(m-2-3)
 (m-1-5) edge [->] node[right] {\small{$\sim$}}(m-2-5);
\end{tikzpicture}
\end{equation}
all vertical arrows are weak equivalences, all bottom nods are fibrant objects and at least one of the bottom arrows is a fibration, we have \be\label{HoPull}B\times_D^{h} C\approx B'\times_{D'}C'\;.\ee
\end{theo}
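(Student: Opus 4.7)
The plan is to exhibit, for every cospan $X=(C\to D\leftarrow B)$, a single object that serves simultaneously as a fibrant replacement of $X$ for each of the three model structures $\zs\in\{\op{inj},\op{Ree}_{\op{I}},\op{Ree}_{\op{D}}\}$. Combined with Equation \eqref{IndeterminacyHoLimSigma} and Theorem \ref{FundamentalCor}, this will immediately yield independence of the homotopy pullback on $\zs$ and on the Kan-extension or strongly universal choice of derived functor, and the general case \eqref{HoPull} will follow by observing that any cospan with three fibrant objects and at least one fibration is $\zs$-fibrant for at least one $\zs$.

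To carry out the first step, pick an $\op{inj}$-fibrant replacement $X\stackrel{\sim}{\to}X''=(C''\twoheadrightarrow D''_{\op{f}}\twoheadleftarrow B'')$ of $X$, which exists by \eqref{InjFib}. Since the weak equivalences of all three model structures on $\tt Fun(I,M)$ are the object-wise weak equivalences of $\tt M$, this morphism is a weak equivalence in every $\zs$. Moreover $D''$ is fibrant and both maps of $X''$ are fibrations of $\tt M\,$, so the compositions $C''\twoheadrightarrow D''\twoheadrightarrow\ast$ and $B''\twoheadrightarrow D''\twoheadrightarrow\ast$ are fibrations, whence $C''$ and $B''$ are fibrant as well. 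Thus $X''$ fulfills each of the fibrancy criteria \eqref{InjFib}, \eqref{ReeIFib} and \eqref{ReeDFib}, so it is a $\zs$-fibrant replacement of $X$ for every $\zs$. Applying \eqref{IndeterminacyHoLimSigma} in each case gives
$$B\times_D^{h_\zs}C\zig\op{Lim}(X'')=B''\times_{D''}C''$$
for every $\zs\in\{\op{inj},\op{Ree}_{\op{I}},\op{Ree}_{\op{D}}\}$, establishing both independences and justifying the notation $B\times_D^hC$.

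For the general case, let $X\Rightarrow X'=(C'\to D'\leftarrow B')$ be a componentwise weak equivalence with $C',D',B'$ all fibrant and at least one of the two arrows of $X'$ a fibration. If the right arrow is a fibration, then \eqref{ReeIFib} shows $X'$ is $\op{Ree}_{\op{I}}$-fibrant; if the left arrow is a fibration, \eqref{ReeDFib} gives $\op{Ree}_{\op{D}}$-fibrancy; if both are fibrations, \eqref{InjFib} yields $\op{inj}$-fibrancy. In every scenario $X\stackrel{\sim}{\to}X'$ is a $\zs$-fibrant replacement of $X$ for at least one $\zs$, and therefore by \eqref{IndeterminacyHoLimSigma} combined with the independence just proved, $\op{Lim}(X')=B'\times_{D'}C'$ represents $B\times_D^hC$ up to a zigzag of weak equivalences, which is exactly \eqref{HoPull}.

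The only substantive checks are that the three model structures share the same class of object-wise weak equivalences and that a cospan with fibrant centre and two fibrations has its two side-corners automatically fibrant; neither is deep, but both are indispensable, and the matching of each fibrancy condition \eqref{InjFib}, \eqref{ReeIFib}, \eqref{ReeDFib} with its structure is what drives the whole argument.
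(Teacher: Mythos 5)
Your proposal is correct and follows essentially the same route as the paper: the paper's argument is the discussion culminating in \eqref{HoPullReeI}, \eqref{HoPullReeD} and \eqref{HoPullInj}, which rests on exactly your two observations --- that an injectively fibrant cospan $G\twoheadrightarrow H_{\op{f}}\twoheadleftarrow E$ has all three nodes fibrant and is therefore fibrant in all three structures $\op{inj}$, $\op{Ree}_{\op{I}}$, $\op{Ree}_{\op{D}}$ simultaneously, and that any cospan with three fibrant nodes and at least one fibration is $\zs$-fibrant for at least one $\zs$. Your explicit matching of each fibrancy criterion \eqref{InjFib}, \eqref{ReeIFib}, \eqref{ReeDFib} to the corresponding case is exactly what the paper leaves implicit.
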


\begin{rem}
\emph{In other words, we consider the {\bf full homotopy pullback} (or simply the homotopy pullback) $B\times_D^{h} C\,,$ whose {\bf canonical representatives} are the standard pullbacks of weakly equivalent cospans whose three nods are fibrant and at least one of whose arrows is a fibration.}
\end{rem}

\section{Models of a homotopy pullback}

In this section we extend the concept of representative of a homotopy limit under the name of homotopy limit model.\medskip

Let $\,\tt S$ be a small category, let $\tt M$ be a model category and let $\zs$ be a model structure on the category $\tt Fun(S,M)$ such that the adjunction $$-^\ast:{{\tt M}\rightleftarrows {\tt Fun}_\zs({\tt S,M})}:\op{Lim}$$ is a Quillen adjunction $-^\ast\dashv\op{Lim}\,.$ If $X\in\tt Fun(S,M)$ and $F_\zs X$ is a fibrant replacement $$t_{F_\zs X}\circ f_{X}:X\stackrel{\sim}{\to}F_\zs X\twoheadrightarrow\ast$$ of $X\,,$ the universal morphism $$\op{Lim}(f_{X}):\op{Lim}X\to\op{Lim}(F_\zs X)$$ from the limit $\op{Lim}X$ of $X$ to the representative $\op{Lim}(F_\zs X)$ of the homotopy limit $\mathbb{R}_\zs\!\op{Lim}(X)$ of $X$ is usually not a weak equivalence.

\begin{defi}\label{HomLimMod}
Let $\tt S\,,$ $\tt M$ and $\zs$ be as above, and let $A\in\tt M\,,$ $X\in\tt Fun(S,M)$ and $$\za\in\op{Hom}_{\tt Fun(S,M)}(A^*,X)\cong\op{Hom}_{\tt M}(A,\op{Lim}X)\ni\op{Lim}\za\;.$$ We say that $A$ is a {\bf generalized representative} of the $\zs$-homotopy limit of $X$ or is a $\zs$-homotopy limit {\bf model} of $X,$ if there exists a fibrant replacement $F_\zs X$ of $X$ such that the composite of universal morphisms $$\op{Lim}(f_{X})\circ\op{Lim}\za: A\to\op{Lim}X\to\op{Lim}(F_\zs X)\;$$ is a weak equivalence.
\end{defi}

\begin{prop}\label{IndHoLimModProp}
If the condition in Definition \ref{HomLimMod} is satisfied for one fibrant replacement, it holds also for every other fibrant replacement.
\end{prop}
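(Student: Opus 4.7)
The plan is to bridge two arbitrary fibrant replacements of $X$ through a fibrant C-replacement, whose structural map is a trivial cofibration and therefore unlocks the lifting axiom of $\zs\,.$

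First I would fix a local fibrant C-replacement $\tilde{f}_X:X\stackrel{\sim}{\rightarrowtail}\tilde{F}_\zs X$ of $X$ in the model structure $\zs\,.$ Given any fibrant replacement $f_X:X\stackrel{\sim}{\to}F_\zs X\,,$ exactly as in Diagram \eqref{FibRep}, the lifting axiom applied to the trivial cofibration $\tilde{f}_X$ and the fibration $F_\zs X\twoheadrightarrow\ast$ produces a morphism $\ell:\tilde{F}_\zs X\to F_\zs X$ satisfying $\ell\circ\tilde{f}_X=f_X\,.$ The 2-out-of-3 axiom applied to this triangle forces $\ell$ to be a weak $\zs$-equivalence, and by construction both $\tilde{F}_\zs X$ and $F_\zs X$ are $\zs$-fibrant.

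Next I would invoke Ken Brown's lemma for the right Quillen functor $\op{Lim}$: its restriction to $\zs$-fibrant objects preserves weak equivalences, so $\op{Lim}(\ell)$ is a weak $\tt M$-equivalence. Functoriality of $\op{Lim}$ then yields $$\op{Lim}(f_X)\circ\op{Lim}\za=\op{Lim}(\ell)\circ\bigl(\op{Lim}(\tilde{f}_X)\circ\op{Lim}\za\bigr)\,,$$ and a further use of 2-out-of-3 shows that $\op{Lim}(f_X)\circ\op{Lim}\za$ is a weak equivalence if and only if $\op{Lim}(\tilde{f}_X)\circ\op{Lim}\za$ is. Since the latter condition makes no mention of $F_\zs X\,,$ applying the same argument to any second fibrant replacement $F'_\zs X$ delivers the desired conclusion: the validity of the model condition for $F_\zs X$ and for $F'_\zs X$ are both equivalent to the validity for the auxiliary $\tilde{F}_\zs X\,,$ hence equivalent to each other.

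The main obstacle is purely conceptual: a generic fibrant replacement map need not be a cofibration, preventing a direct lifting between two arbitrary fibrant replacements. Routing the comparison through a fibrant C-replacement — the same device already used in Diagram \eqref{FibRep} and in the derivation of the canonical isomorphism \eqref{Indeterminacy2} — removes this obstruction without any further technicalities.
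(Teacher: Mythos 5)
Your proposal is correct and follows essentially the same route as the paper: both compare an arbitrary fibrant replacement to a fibrant C-replacement via the lifting $\ell$ of Diagram \eqref{FibRep}, observe that $\op{Lim}(\ell)$ is a weak equivalence because $\op{Lim}$ preserves weak equivalences between fibrant objects (the paper cites \eqref{UniMorHoLim}, you cite Ken Brown's lemma --- the same fact), and conclude by 2-out-of-3 that the model condition for any replacement is equivalent to the one for the auxiliary $\tilde{F}_\zs X\,.$ No substantive difference.
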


\begin{proof}
Let $F_\zs'X$ be another fibrant replacement of $X$ and let $\tilde{F}_\zs X$ be a fibrant C-replacement:

\begin{equation} \begin{tikzpicture}
 \matrix (m) [matrix of math nodes, row sep=1.5em, column sep=2em]
   {  &&\text{$A^*$}&&\\ &&&&\\&&\text{$X$}&& \\ &&&&\\ \text{$F_\zs X$}
       && \text{$\tilde{F}_\zs X$} && \text{$F_\zs' X$} \\ };
 \path[->]
 (m-1-3) edge node[auto] {\small{$\za$}} (m-3-3)
 (m-3-3) edge node[above] {\small{$f_X\;\;$}} (m-5-1)
 (m-3-3) edge node[auto] {\small{$\tilde{f}_X$}} (m-5-3)
 (m-3-3) edge node[auto] {\small{$f'_X$}} (m-5-5)
 (m-5-1) edge[<-] node[below] {\small{$\ell_\zs$}} (m-5-3)
 (m-5-3) edge node[below] {\small{$\ell'_\zs$}} (m-5-5);
\end{tikzpicture}
\end{equation}
Recall that the liftings $\ell_\zs$ and $\ell'_\zs$ in the previous commutative triangles are weak equivalences since $f_X,\tilde{f}_X$ and $f'_X$ are (see \eqref{FibRep}). As \be\label{IndHoLimMod}\op{Lim}(f_X)\circ\op{Lim}\za=\op{Lim}(f_X\circ\za)=\op{Lim}(\ell_\zs\circ\tilde{f}_X\circ\za)=\op{Lim}(\ell_\zs)\circ{\op{Lim}}(\tilde{f}_X\circ\za)\;,\ee it follows from \eqref{UniMorHoLim} that ${\op{Lim}}(\tilde{f}_X\circ\za)$ is a weak equivalence, and it follows from \eqref{IndHoLimMod} written for $f'_X$ and $\ell'_\zs$ and from \eqref{UniMorHoLim} that $\op{Lim}(f'_X)\circ\op{Lim}\za$ is a weak equivalence.
\end{proof}

In the special case of the homotopy pullback the category $\tt S$ is ${\tt I}=\{c\to d\leftarrow b\}$ and $X$ is a cospan $\{C\to D\leftarrow B\}\,.$ The natural transformation $\za$ is made of adjacent commutative squares whose top row $A\to A\leftarrow A$ contains two copies of $\id_A\,,$ or, better, is made of a single commutative square
\begin{equation}\label{ComSqu}
\begin{tikzpicture}
 \matrix (m) [matrix of math nodes, row sep=2.5em, column sep=1.5em]
   {A && B\\
    C && D\\};
 \path[->]
 (m-1-1) edge [->] (m-1-3)
 (m-2-1) edge [->] (m-2-3)
 (m-1-1) edge [->] (m-2-1)
 (m-1-3) edge [->] (m-2-3);
\end{tikzpicture}
\end{equation}
and $\op{Lim}\za$ is the universal morphism $A\to B\times_DC\,.$ The replacement $F_\zs X$ is a fibrant cospan $C'\to D'\leftarrow B'$ to which $C\to D\leftarrow B$ is weakly equivalent; its pullback $B'\times_{D'}C'$ is a representative of $B\times_D^{h_\zs}C\,.$ The composite $\op{Lim}(f_X)\circ\op{Lim}\za$ of universal morphisms is the universal morphism $A\to B\times_DC\to B'\times_{D'}C'$ from $A$ to the representative of $B\times_D^{h_\zs}C$ considered. Hence, the definition \eqref{HomLimMod} becomes:

\begin{defi}\label{HomPulMod}
Let $\tt M$ be a model category and let $\zs$ be a model structure on the category of cospans of $\,\tt M$ such that $-^\ast\dashv \op{Lim}$ is a Quillen adjunction. The vertex $A$ of a commutative square \eqref{ComSqu} is a {\bf model or generalized representative} of the $\zs$-homotopy pullback $B\times^{h_\zs}_DC$ if there exists a fibrant replacement $C'\to D'\leftarrow B'$ of $\,C\to D\leftarrow B$ in $\zs$ such that the universal morphism $A\to B'\times_{D'}C'$ from $A$ to the representative of $B\times^{h_\zs}_DC$ considered, is a weak equivalence.
\end{defi}

The condition in Definition \ref{HomPulMod} is satisfied for every fibrant replacement in $\zs$ if it is satisfied for one of them. As mentioned in the proof of Proposition \ref{IndHoLimModProp}, this independence of the replacement is due to \eqref{UniMorHoLim}, therefore it is a consequence of the fact that the limit functor preserves weak equivalences between fibrant objects; so it is ultimately a consequence of the assumption that $-^\ast\dashv \op{Lim}$ is a Quillen adjunction.\medskip

If we confine ourselves to the model structures $\zs\in\{\op{inj},\op{Ree}_{\op{I}},\op{Ree}_{\op{D}}\}\,,$ the definition is not only independent of the replacement, but also of the model structure in which this replacement is chosen:

\begin{theo}\label{IndHoPullMod}
The vertex $A$ of a commutative square \eqref{ComSqu} in a model category is a model or \emph{generalized representative} of the full homotopy pullback $B\times_D^{h}C$ if the universal morphism from $A$ to a \emph{canonical representative} of $B\times_D^{h}C$ is a weak equivalence. In other words, there must exist a cospan $C'\to D'\leftarrow B'$ to which $C\to D\leftarrow B$ is weakly equivalent, whose three nodes are fibrant objects and at least one of whose morphisms is a fibration, such that the universal morphism $A\to B'\times_{D'}C'$ is a weak equivalence.
\end{theo}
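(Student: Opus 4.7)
The plan is to show that the stated condition is independent of the chosen canonical representative and coincides with $A$ being a $\zs$-model of $X=(C\to D\leftarrow B)$ in the sense of Definition \ref{HomPulMod} for every $\zs\in\{\op{inj},\op{Ree}_{\op{I}},\op{Ree}_{\op{D}}\}$. The forward direction is immediate: inspection of the fibrancy characterizations \eqref{InjFib}, \eqref{ReeIFib} and \eqref{ReeDFib} shows that any $\zs$-fibrant replacement is already a canonical cospan (three fibrant nodes, at least one fibration), so if $A$ is a $\zs$-model for some $\zs$, then the corresponding universal weak equivalence already witnesses the condition.

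For the substantial converse, suppose $C'\to D'\leftarrow B'$ is weakly equivalent to $X$, canonical, and $A\to B'\times_{D'}C'$ is a weak equivalence. The first step is to factor any remaining non-fibration arrow of $(C',D',B')$ as a trivial cofibration followed by a fibration, producing an $\op{inj}$-fibrant cospan $C''\to D''\leftarrow B''$ together with an object-wise weak equivalence $(C',D',B')\stackrel{\sim}{\Rightarrow}(C'',D'',B'')$. Composed with the initial weak equivalence $X\Rightarrow(C',D',B')$, this yields an $\op{inj}$-fibrant replacement of $X$; since $\op{inj}$-fibrancy implies both $\op{Ree}_{\op{I}}$- and $\op{Ree}_{\op{D}}$-fibrancy (immediate from \eqref{InjFib}, \eqref{ReeIFib}, \eqref{ReeDFib}), it serves as a $\zs$-fibrant replacement of $X$ for every $\zs$ simultaneously.

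The crux is to show that the induced map $B'\times_{D'}C'\to B''\times_{D''}C''$ is a weak equivalence. Being canonical, $(C',D',B')$ is $\zs_0$-fibrant for at least one $\zs_0\in\{\op{Ree}_{\op{I}},\op{Ree}_{\op{D}}\}$ (namely $\op{Ree}_{\op{I}}$ if $B'\to D'$ is a fibration, $\op{Ree}_{\op{D}}$ if $C'\to D'$ is), and the $\op{inj}$-fibrant upgrade $(C'',D'',B'')$ is a fortiori $\zs_0$-fibrant. The comparison is an object-wise, hence $\zs_0$-, weak equivalence between fibrant objects of the Reedy model structure $\zs_0$, so Brown's lemma applied to the right Quillen functor $\op{Lim}:{\tt Fun}_{\zs_0}({\tt I},{\tt M})\to{\tt M}$ yields the desired weak equivalence on pullbacks. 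Functoriality identifies $A\to B'\times_{D'}C'\to B''\times_{D''}C''$ with the universal morphism $A\to B''\times_{D''}C''$, and 2-out-of-3 makes the latter a weak equivalence. Combined with the independence of the $\zs$-model condition from the choice of fibrant replacement within $\zs$ (established in the paragraph after Definition \ref{HomPulMod}), this yields that $A$ is a $\zs$-model for every $\zs$, and concurrently that the original condition does not depend on which canonical representative was selected. The main obstacle is precisely this cube-type step: it is the observation that any canonical cospan is already Reedy-fibrant for one of the two structures $\op{Ree}_{\op{I}},\op{Ree}_{\op{D}}$ that allows Brown's lemma to be invoked without any right-properness hypothesis on $\tt M$.
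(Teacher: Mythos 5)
Your proposal is correct and follows essentially the same route as the paper: observe that a canonical cospan (three fibrant nodes, one fibration) is already a fibrant replacement for at least one of $\op{inj},\op{Ree}_{\op{I}},\op{Ree}_{\op{D}}$, bridge to a doubly-fibered replacement $G\twoheadrightarrow H_{\op{f}}\twoheadleftarrow E$ that is fibrant for all three structures, and invoke the replacement-independence of the model condition within each structure (Proposition \ref{IndHoLimModProp}). Your only deviation is cosmetic: you make the bridge explicit by factoring the remaining non-fibration and applying Brown's lemma to $\op{Lim}$ directly, where the paper simply cites \eqref{HoPullReeI}--\eqref{HoPullInj} and Proposition \ref{IndHoLimModProp}.
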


\begin{proof}
If the condition is satisfied for a fibrant replacement in one of the three model structures, it is satisfied for all the fibrant replacements in this model structure and in particular for the replacements of the type $G\twoheadrightarrow H_{\op{f}}\twoheadleftarrow E\,$. Hence, it is also satisfied for all the fibrant replacements in any of the other two model structures (see \eqref{HoPullReeI},\eqref{HoPullReeD},\eqref{HoPullInj}).\end{proof}

\begin{rem}\label{IndRep31}
\emph{We just showed that if the condition in Theorem \ref{IndHoPullMod} is satisfied for one replacement with three fibrant nodes and at least one fibration, it is satisfied for all replacements of this type. In other words, the concept of model is compatible with our identification of the homotopy pullbacks with respect to $\zs\in\{\op{inj}, \op{Ree}_{\op{I}},\op{Ree}_{\op{D}}\}\,.$}
\end{rem}

If the model category is right proper, we can weaken the model condition:

\begin{theo}\label{RightProper}
The vertex $A$ of a commutative square \eqref{ComSqu} in a \emph{right proper} model category is a model of the full homotopy pullback $B\times_D^{h}C$ if there exists a cospan $C'\to D'\leftarrow B'$ to which $C\to D\leftarrow B$ is weakly equivalent and at least one of whose morphisms is a fibration, such that the universal morphism $A\to B'\times_{D'}C'$ is a weak equivalence.
\end{theo}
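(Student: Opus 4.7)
The plan is to reduce the weakened hypothesis of this theorem to the full-strength hypothesis of Theorem~\ref{IndHoPullMod}. I will start from the given cospan $C'\to D'\leftarrow B'$ (weakly equivalent to $C\to D\leftarrow B$, with at least one morphism a fibration) and construct a canonical representative cospan $C''\twoheadrightarrow D''\twoheadleftarrow B''$ (three fibrant nodes, both morphisms fibrations) together with a node-wise weak equivalence from $C'\to D'\leftarrow B'$. Right properness will then transport the given weak equivalence $A\to B'\times_{D'}C'$ to a weak equivalence $A\to B''\times_{D''}C''$, after which Theorem~\ref{IndHoPullMod} applies directly.

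Without loss of generality I assume $B'\twoheadrightarrow D'$ is a fibration. The canonical representative cospan is built by three successive functorial (TrivCof, Fib)-factorizations: first factor $D'\to\ast$ as $D'\stackrel{\sim}{\rightarrowtail}D''\twoheadrightarrow\ast$; then factor the composites $C'\to D'\stackrel{\sim}{\rightarrowtail}D''$ and $B'\twoheadrightarrow D'\stackrel{\sim}{\rightarrowtail}D''$ as $C'\stackrel{\sim}{\rightarrowtail}C''\twoheadrightarrow D''$ and $B'\stackrel{\sim}{\rightarrowtail}B''\twoheadrightarrow D''$ respectively. The resulting cospan $C''\twoheadrightarrow D''\twoheadleftarrow B''$ has three fibrant nodes (since $D''$ is fibrant and both horizontal arrows are fibrations) and both arrows are fibrations, and it is connected node-wise to $C'\to D'\leftarrow B'$ by trivial cofibrations, hence by weak equivalences.

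The crucial and only step that genuinely uses right properness is to show that the induced map on pullbacks
\[
B'\times_{D'}C'\to B''\times_{D''}C''
\]
is a weak equivalence. This is the standard cube (or gluing) lemma for pullbacks in right proper model categories: given a morphism of cospans with all three vertical arrows weak equivalences and with a fibration in each row (here $B'\twoheadrightarrow D'$ in the top row, $B''\twoheadrightarrow D''$ in the bottom), the induced map on pullbacks is a weak equivalence. Composing this with the hypothesised weak equivalence $A\to B'\times_{D'}C'$ yields a weak equivalence $A\to B''\times_{D''}C''$; since $C''\twoheadrightarrow D''\twoheadleftarrow B''$ is a canonical representative of the full homotopy pullback $B\times_D^{h}C$ in the sense of Theorem~\ref{IndHoPull}, Theorem~\ref{IndHoPullMod} concludes that $A$ is a model. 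The main obstacle is precisely this cube lemma invocation; it is what converts the weaker ``only one fibration, nodes not necessarily fibrant'' hypothesis into the fibrancy-plus-fibration canonical representative required by Theorem~\ref{IndHoPullMod}.
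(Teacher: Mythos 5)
Your proof is correct, but it takes a genuinely different route from the paper's. You build a fully fibrant cospan $C''\twoheadrightarrow D''\twoheadleftarrow B''$ by three successive (TrivCof, Fib)-factorizations (first fibrantly replacing $D'$, then factoring both legs through $D''$), and then transport the weak equivalence on pullbacks in a single stroke by invoking the gluing lemma for pullbacks in right proper model categories (a morphism of cospans with node-wise weak equivalences and a fibration on the same side of each row induces a weak equivalence of pullbacks; this is Proposition 13.3.9 in \cite{Hir}). The paper instead applies a fibrant C-replacement functor $R$ to the whole cospan, factors only the non-fibration leg $Rg$ as $RC'\stackrel{\sim}{\to}F(Rg)\twoheadrightarrow RD'$, and then establishes the weak equivalence $B'\times_{D'}C'\stackrel{\sim}{\dashrightarrow}RB'\times_{RD'}F(Rg)$ by hand: a 2-out-of-3 argument giving $C'\stackrel{\sim}{\dashrightarrow}D'\times_{RD'}F(Rg)$, followed by the pasting law and two applications of its own Lemma \ref{PasLem}. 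Your version is shorter and more conceptual, and your target cospan has both legs fibrations rather than one; what it costs is reliance on the cube/gluing lemma, which the paper neither states nor proves and whose standard proof is essentially the same chain of Lemma-\ref{PasLem}-type steps that the paper carries out explicitly -- so in a self-contained treatment you would still owe that argument. One small point to make explicit in your write-up: the composite $A\to B'\times_{D'}C'\to B''\times_{D''}C''$ coincides, by the universal property of the pullback, with the universal morphism from $A$ to $B''\times_{D''}C''$, which is what Theorem \ref{IndHoPullMod} actually tests.
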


\begin{lem}\label{PasLem} Let $\tt M$ be a right proper model category and let $f:A\to D\,,$ $g:B\to C$ and $h:C\to D$ be morphisms in $\tt M\,$. The pullbacks $A\times_DB$ and $A\times_DC$ exist and there is a universal morphism $u:A\times_DB\to A\times_DC\,.$ If $f:A\twoheadrightarrow D$ and $g:B\stackrel{\sim}{\to} C\,,$ we have $u:A\times_DB\stackrel{\sim}{\dashrightarrow} A\times_DC\,:$
\begin{equation}
\begin{tikzpicture}
 \matrix (m) [matrix of math nodes, row sep=3em, column sep=2.5em]
   {\text{$A\times_DB$} & & \text{$A\times_DC$} & & A \\
    B & & C & & D \\};
 \path[dashed,->] (m-1-1)  edge[pil] node[above] {\small{$\;\sim\;\;\;u$}} (m-1-3);
 \path[->]
 (m-1-3) edge [->] (m-1-5)
 (m-2-1) edge [->] node[above] {\small{$\;\sim\;\;\; g$}} (m-2-3)
 (m-2-3) edge [->] node[auto] {\small{$h$}} (m-2-5)
 (m-1-1) edge [->] (m-2-1)
 (m-1-3) edge [->] node[auto] {\small{$k$}} (m-2-3)
 (m-1-5) edge [->>] node[right] {\small{$f$}} (m-2-5);
\end{tikzpicture}
\end{equation}
\end{lem}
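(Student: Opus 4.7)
The plan is to exhibit the universal morphism $u$ itself as a pullback of the weak equivalence $g$ along a fibration, and then to invoke the definition of right properness directly.

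First I would dispatch the existence claims, which are formal: since $\tt M$ is complete, both pullbacks $A\times_DB$ and $A\times_DC$ exist. The morphism $u$ is then produced by applying the universal property of $A\times_DC$ to the cone on $A\to D\leftarrow C$ consisting of the projection $p_A\colon A\times_DB\to A$ and the composite $g\circ p_B\colon A\times_DB\to B\to C$; commutativity of $f\circ p_A = h\circ g\circ p_B$ follows from the pullback structure of $A\times_DB$ together with the factorization $A\times_DB\to B\to D$ being equal to $A\times_DB\to B\stackrel{g}{\to}C\stackrel{h}{\to}D$.

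Next I would check two properties of the intermediate pullback $A\times_DC$. On the one hand, the projection $k\colon A\times_DC\to C$ is a fibration, since it is the pullback of the fibration $f\colon A\twoheadrightarrow D$ along $h$, and fibrations are stable under pullback. On the other hand, by the pullback pasting lemma applied to the rectangle
\begin{equation*}
\begin{tikzpicture}
 \matrix (m) [matrix of math nodes, row sep=2.5em, column sep=2.5em]
   {A\times_DB & A\times_DC & A \\
    B & C & D \\};
 \path[->]
 (m-1-1) edge node[above] {\small{$u$}} (m-1-2)
 (m-1-2) edge (m-1-3)
 (m-2-1) edge node[below] {\small{$g$}} (m-2-2)
 (m-2-2) edge node[below] {\small{$h$}} (m-2-3)
 (m-1-1) edge (m-2-1)
 (m-1-2) edge node[right] {\small{$k$}} (m-2-2)
 (m-1-3) edge node[right] {\small{$f$}} (m-2-3);
\end{tikzpicture}
\end{equation*}
— whose right square is a pullback by construction and whose outer rectangle is the pullback defining $A\times_DB$ (with the composite map $h\circ g\colon B\to D$) — the left square is also a pullback. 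Thus $u$ is precisely the pullback of $g$ along $k$.

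The conclusion is now immediate: $u$ is the pullback of the weak equivalence $g\colon B\stackrel{\sim}{\to}C$ along the fibration $k\colon A\times_DC\twoheadrightarrow C$, and by right properness of $\tt M$ such a pullback is a weak equivalence, so $u\colon A\times_DB\stackrel{\sim}{\to}A\times_DC\,.$ I do not anticipate a genuine obstacle here; the only subtle point is correctly applying the pasting lemma in the right direction (from outer and right pullback to left pullback), after which right properness does all the work.
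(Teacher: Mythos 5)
Your proof is correct and follows essentially the same route as the paper's: identify the left square as a pullback via the pasting law (so that $u$ is the pullback of $g$ along $k$), observe that $k$ is a fibration because fibrations are stable under pullback, and conclude by right properness. The extra detail you supply on the existence of the pullbacks and the construction of $u$ is fine but not needed beyond what the paper already takes for granted.
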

\begin{proof}
Recall the {\bf pasting law for pullbacks}: if in adjacent commutative squares in a category the right square is a pullback, then the left square is a pullback if and only if the total square is a pullback. In our case it follows that the left square is a pullback: $A\times_DB\cong(A\times_DC)\times_CB\,.$ Since fibrations are closed under pullbacks, we get that $k$ is a fibration. As a right proper model category is a model category in which pullbacks along fibrations preserve weak equivalences, we see that $u$ is a weak equivalence.
\end{proof}

\begin{proof}[Proof of Theorem \ref{RightProper}]
Assume that $C'\stackrel{g}{\to} D'\stackrel{f}{\leftarrow} B'$ is a replacement of $C\to D\leftarrow B$ and that one of its morphisms is a fibration, for instance the {\it second} one. If we apply a fibrant C-replacement functor $R$ to $C'\stackrel{g}{\to} D'\stackrel{f}{\twoheadleftarrow} B'$ and decompose the {\it first} arrow $RC'\stackrel{Rg}{\to} RD'$ into $RC'\stackrel{\sim}{\to}F(Rg)\twoheadrightarrow RD'\,,$ we get the commutative diagram
\begin{equation}\label{}
\begin{tikzpicture}
 \matrix (m) [matrix of math nodes, row sep=1.5em, column sep=1.5em]
   {&& A && \\
   C && D && B\\
   C' && D' && B'\\
   RC' && RD' && RB'\\
   F(Rg) && RD' && RB'\\};
 \path[->]
 (m-1-3) edge [->] (m-2-1)
 (m-1-3) edge [->] (m-2-5)
 (m-2-1) edge [->] (m-2-3)
 (m-2-3) edge [<-] (m-2-5)
 (m-2-1) edge [->] node[auto] {\small{$\sim$}}(m-3-1)
 (m-2-3) edge [->] node[auto] {\small{$\sim$}}(m-3-3)
 (m-2-5) edge [->] node[auto] {\small{$\sim$}}(m-3-5)
 (m-3-1) edge [->] node[auto] {\small{$g$}} (m-3-3)
 (m-3-3) edge [<<-] node[auto] {\small{$f$}} (m-3-5)
 (m-3-1) edge [>->] node[auto] {\small{$\sim$}}(m-4-1)
 (m-3-3) edge [>->] node[auto] {\small{$\sim$}}(m-4-3)
 (m-3-5) edge [>->] node[auto] {\small{$\sim$}}(m-4-5)
 (m-4-1) edge [->] node[auto] {\small{$Rg$}} (m-4-3)
 (m-4-3) edge [<-] node[auto] {\small{$Rf$}} (m-4-5)
 (m-4-1) edge [->] node[auto] {\small{$\sim$}} (m-5-1)
 (m-4-3) edge [->] node[auto] {\small{$\id$}} (m-5-3)
 (m-4-5) edge [->] node[auto] {\small{$\id$}} (m-5-5)
 (m-5-1) edge [->>] (m-5-3)
 (m-5-3) edge [<-] node[auto] {\small{$Rf$}} (m-5-5) ;
\end{tikzpicture}
\end{equation}
From the 2-out-of-3 axiom it follows that there is a universal weak equivalence $$C'\stackrel{\sim}{\dashrightarrow} D'\times_{RD'}F(Rg)\;,$$ as the model category is right proper. In view  of Lemma \ref{PasLem}, we have now universal weak equivalences $$B'\times_{D'}C'\stackrel{\sim}{\dashrightarrow} B'\times_{D'}(D'\times_{RD'}F(Rg))\cong B'\times_{RD'}F(Rg)\stackrel{\sim}{\dashrightarrow}RB'\times_{RD'}F(Rg)\;.$$ Hence, the universal morphism \be\label{Former}A\dashrightarrow B'\times_{D'}C'\ee is a weak equivalence if and only if the universal morphism \be\label{Former2}A\dashrightarrow RB'\times_{RD'}F(Rg)\ee is a weak equivalence. Since the cospan $F(Rg)\twoheadrightarrow RD'\leftarrow RB'$ is weakly equivalent to $C\to D\leftarrow B\,,$ has three fibrant nodes and at least one of its morphisms is a fibration, the vertex $A$ of the square \eqref{ComSqu} is a model of $B\times_D^hC\,,$ if \eqref{Former} is a weak equivalence.
\end{proof}

\begin{prop}\label{IndRep1}
If the condition in Theorem \ref{RightProper} is satisfied for one replacement with one fibration it is satisfied for all replacements of this type.
\end{prop}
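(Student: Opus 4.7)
The plan is to reduce the statement to Remark \ref{IndRep31} by re-using the biconditional that was established, but perhaps not emphasised, inside the proof of Theorem \ref{RightProper}. Given any replacement $C' \to D' \leftarrow B'$ of $C \to D \leftarrow B$ with at least one morphism being a fibration, that proof associates to it a canonical representative $F(Rg) \twoheadrightarrow RD' \twoheadleftarrow RB'$ (three fibrant nodes, at least one fibration, still weakly equivalent to $C \to D \leftarrow B$) and shows, via Lemma \ref{PasLem} together with the 2-out-of-3 axiom and right properness, that the universal morphism $A \to B' \times_{D'} C'$ is a weak equivalence \emph{if and only if} the universal morphism $A \to RB' \times_{RD'} F(Rg)$ is a weak equivalence.

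The main step is then to chain this biconditional with Remark \ref{IndRep31}. Concretely, given two replacements $C_i' \to D_i' \leftarrow B_i'$ ($i \in \{1, 2\}$) each having at least one morphism that is a fibration, I suppose that the condition of Theorem \ref{RightProper} holds for $i=1$ and run the iff forwards: the universal morphism $A \to RB_1' \times_{RD_1'} F(Rg_1)$ to the canonical representative associated with the first replacement is a weak equivalence. By Remark \ref{IndRep31} the model condition is independent of the choice of canonical representative, so the universal morphism $A \to RB_2' \times_{RD_2'} F(Rg_2)$ to the canonical representative associated with the second replacement is also a weak equivalence. Running the iff backwards for $i=2$ then gives that $A \to B_2' \times_{D_2'} C_2'$ is a weak equivalence, which is the conclusion.

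I do not expect a serious obstacle. The only point that deserves attention is the symmetry between the two possible positions of the fibration in the replacement: the construction in Theorem \ref{RightProper} was written assuming the second arrow of the replacement is a fibration, but the other case is treated identically by interchanging the roles of $B'$ and $C'$. One should also check that the canonical representatives produced from replacements $1$ and $2$ are of the type required by Remark \ref{IndRep31} (three fibrant nodes, at least one fibration), which is immediate from their construction, and that right properness is indeed only used through Lemma \ref{PasLem} and the invariance of weak equivalences under pullbacks along fibrations, both of which are already available in the hypotheses of Theorem \ref{RightProper}.
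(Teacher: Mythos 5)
Your proposal is correct and follows essentially the same route as the paper: it re-uses the biconditional between the universal morphisms $A\to B'\times_{D'}C'$ and $A\to RB'\times_{RD'}F(Rg)$ established in the proof of Theorem \ref{RightProper} (Equations \eqref{Former} and \eqref{Former2}) and then invokes Remark \ref{IndRep31} to transfer the condition between the associated canonical representatives. Your extra remarks on the symmetry of the fibration's position and on where right properness enters are sensible but do not change the argument.
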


\begin{proof}
We see from Equations \eqref{Former} and \eqref{Former2} that the condition is satisfied for a given replacement with one fibration if and only if it is satisfied for an associated replacement with three fibrant nodes and one fibration. However, from Remark \ref{IndRep31} we know that if the condition is satisfied for one replacement of the latter type it is satisfied for all replacements of this type.\end{proof}

The following corollary is stated without proof in \cite{JL}:

\begin{cor}\label{CorLur}
In a model category the standard pullback $B\times_DC$ of a cospan $C\stackrel{g}{\to} D\stackrel{f}{\leftarrow} B$ is a model of the cospan's homotopy pullback if at least one of the morphisms $f$ or $g$ is a fibration and either all three objects $B,C,D$ are fibrant or the model category is right proper.
\end{cor}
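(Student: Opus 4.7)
The plan is to choose, in both alternatives of the hypothesis, the ``replacement'' cospan in Theorems \ref{IndHoPullMod} and \ref{RightProper} to be the cospan $C\to D\leftarrow B$ \emph{itself}, related to itself by identity weak equivalences. With that choice, the universal morphism from $A=B\times_D C$ (equipped with its canonical projections, which make the square \eqref{ComSqu} commute) to $B'\times_{D'}C'=B\times_D C$ is the identity $\id_A$, hence trivially a weak equivalence. So the only work left is to verify that the cospan $C\to D\leftarrow B$ satisfies the conditions imposed on the replacement in the relevant theorem.

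In the first case, namely when $B,C,D$ are all fibrant and at least one of $f,g$ is a fibration, the cospan has three fibrant nodes and at least one morphism that is a fibration, so it is precisely a canonical representative in the sense of Theorem \ref{IndHoPull}. Equivalently, it already has one of the fibrant forms \eqref{ReeIFib} or \eqref{ReeDFib}, depending on which of $f,g$ is assumed to be a fibration. Theorem \ref{IndHoPullMod} therefore applies and shows that $A$ is a model of $B\times_D^h C$. In the second case, namely when the model category is right proper and at least one of $f,g$ is a fibration, the cospan is weakly equivalent to itself and one of its morphisms is a fibration, so Theorem \ref{RightProper} applies directly and gives the same conclusion.

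There is essentially no obstacle here: the entire content of the corollary is the observation that, under its hypotheses, no nontrivial replacement of the cospan is needed in order to realise the standard pullback as a model of the homotopy pullback.
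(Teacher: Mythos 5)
Your proposal is correct and coincides with the paper's own argument: both take the cospan $C\to D\leftarrow B$ as a replacement of itself, note that the universal morphism $\id:B\times_DC\to B\times_DC$ is a weak equivalence, and then invoke Theorem \ref{IndHoPullMod} in the fibrant case and Theorem \ref{RightProper} in the right proper case.
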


\begin{proof}
Under the stated conditions $B\times_DC$ is a model of $B\times_D^hC\,.$ Indeed, if the model category is right proper (resp., $B,C$ and $D$ are fibrant), the cospan $C\to D\leftarrow B$ is a replacement of itself, one of its morphisms is a fibration (resp., and all its nodes are fibrant), and the universal morphism $\id:B\times_DC\dashrightarrow B\times_DC$ is a weak equivalence. The conclusion now follows from Theorem \ref{RightProper} (resp., Theorem \ref{IndHoPullMod}).
\end{proof}

Further, the concept of model of a homotopy pullback captures the notion of homotopy fiber square defined in \cite{Hir} and puts it in the right context.\medskip

Let $(\za,\zb)$ be a {\bf f}ixed {\bf f}unctorial trivial cofibration - fibration {\bf f}actorization system (FFF for short) of a model category and let $C\stackrel{g}{\to} D\stackrel{f}{\leftarrow}B$ be a cospan. The system considered provides decompositions $$C\stackrel{\sim}{\rightarrowtail}\zX(g)\twoheadrightarrow D\twoheadleftarrow\zX(f)\stackrel{\sim}{\leftarrowtail}B\;.$$

\begin{defi}\label{HoFibSqDef}
Let $\,\tt M$ be a right proper model category with an FFF. A commutative square
\begin{equation}\label{ComSqu2}
\begin{tikzpicture}
 \matrix (m) [matrix of math nodes, row sep=2.5em, column sep=1.5em]
   {A && B\\
    C && D\\};
 \path[->]
 (m-1-1) edge [->] (m-1-3)
 (m-2-1) edge [->] node[auto] {\small{$g$}} (m-2-3)
 (m-1-1) edge [->] (m-2-1)
 (m-1-3) edge [->] node[auto] {\small{$f$}} (m-2-3);
\end{tikzpicture}
\end{equation}
is a {\bf homotopy fiber square} if the universal morphism $A\dashrightarrow \zX(f)\times_D\zX(g)$ is a weak equivalence.
\end{defi}

\begin{cor}\label{HoFibSqCor}
In a right proper model category with an FFF, the vertex $A$ of a commutative square \eqref{ComSqu2} is a model of the homotopy pullback $B\times_D^hC$ if and only if it is a homotopy fiber square.
\end{cor}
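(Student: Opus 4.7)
The plan is to reduce both directions of the equivalence to the framework already set up in Theorem \ref{RightProper} and Proposition \ref{IndRep1}, using the fixed factorization system to produce a distinguished replacement of the cospan $C\stackrel{g}{\to} D\stackrel{f}{\leftarrow} B$. Specifically, applying the FFF $(\za,\zb)$ to the morphisms $g$ and $f$ yields the decomposition $C\stackrel{\sim}{\rightarrowtail}\zX(g)\twoheadrightarrow D\twoheadleftarrow\zX(f)\stackrel{\sim}{\leftarrowtail}B$ and hence a commutative diagram of cospans
\begin{equation*}
\begin{tikzpicture}
 \matrix (m) [matrix of math nodes, row sep=2.5em, column sep=2em]
   {C & D & B \\ \zX(g) & D & \zX(f) \\};
 \path[->]
 (m-1-1) edge (m-1-2) (m-1-2) edge[<-] (m-1-3)
 (m-2-1) edge[->>] (m-2-2) (m-2-2) edge[<<-] (m-2-3)
 (m-1-1) edge[>->] node[left]{\small$\sim$} (m-2-1)
 (m-1-2) edge node[left]{\small$\id$} (m-2-2)
 (m-1-3) edge[>->] node[right]{\small$\sim$} (m-2-3);
\end{tikzpicture}
\end{equation*}
whose commutativity is exactly the content of the FFF factorizations of $g$ and $f$. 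Thus the lower cospan is a weakly equivalent replacement of $C\to D\leftarrow B$ in which both outer arrows are fibrations, so it satisfies the hypotheses of Theorem \ref{RightProper} and of Proposition \ref{IndRep1}.

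For the forward direction, suppose $A$ is a homotopy fiber square in the sense of Definition \ref{HoFibSqDef}, so that the universal morphism $A\dashrightarrow\zX(f)\times_D\zX(g)$ is a weak equivalence. Taking $C'\to D'\leftarrow B'$ to be the FFF replacement just described, whose second arrow (indeed, both arrows) is a fibration, Theorem \ref{RightProper} directly concludes that $A$ is a model of the full homotopy pullback $B\times_D^hC$.

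For the converse direction, suppose $A$ is a model of $B\times_D^hC$. By the very definition of model used in Theorem \ref{RightProper}, there exists \emph{some} replacement of $C\to D\leftarrow B$ in which at least one morphism is a fibration and for which the induced universal morphism from $A$ is a weak equivalence. By Proposition \ref{IndRep1}, the condition is independent of the choice of such a replacement; applying it to the FFF replacement $\zX(g)\twoheadrightarrow D\twoheadleftarrow\zX(f)$ shows that $A\dashrightarrow\zX(f)\times_D\zX(g)$ is a weak equivalence, i.e., the square is a homotopy fiber square.

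The only substantive check is the initial observation that the FFF data genuinely produce a weak equivalence of cospans with the required properties, but this is immediate from the defining property of the FFF. The rest of the argument is purely a matter of reading off the right instance of the already-proved results, and I do not anticipate any genuine obstacle; the conceptual content of the corollary is precisely that the FFF provides a canonical member of the family of replacements against which the model condition is tested.
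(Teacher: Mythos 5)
Your proof is correct and follows essentially the same route as the paper: both identify the FFF factorization $\zX(g)\twoheadrightarrow D\twoheadleftarrow\zX(f)$ as a weakly equivalent replacement of $C\to D\leftarrow B$ with at least one fibration, and then read off the equivalence from Theorem \ref{RightProper} together with the replacement-independence of Proposition \ref{IndRep1}. The paper merely compresses the two directions into a single ``if and only if'' sentence, whereas you spell them out separately; the mathematical content is identical.
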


\begin{proof} In view of Proposition \ref{IndRep1}, since the second row in the commutative diagram
\begin{equation}\label{}
\begin{tikzpicture}
 \matrix (m) [matrix of math nodes, row sep=1.5em, column sep=1.5em]
   {&& A && \\
   C && D && B\\
   \zX(g) && D && \zX(f)\\};
 \path[->]
 (m-1-3) edge [->] (m-2-1)
 (m-1-3) edge [->] (m-2-5)
 (m-2-1) edge [->] node[auto]  {\small{$g$}} (m-2-3)
 (m-2-3) edge [<-] node[auto] {\small{$f$}} (m-2-5)
 (m-2-1) edge [>->] node[auto] {\small{$\sim$}}(m-3-1)
 (m-2-3) edge [->] node[auto] {\small{$\id$}}(m-3-3)
 (m-2-5) edge [>->] node[auto] {\small{$\sim$}}(m-3-5)
 (m-3-1) edge [->>] (m-3-3)
 (m-3-3) edge [<<-] (m-3-5);
 \end{tikzpicture}
\end{equation}
is a replacement of the first row with at least one fibration, the vertex $A$ of the commutative triangle or square is a model of the homotopy pullback $B\times_D^hC$ if and only if the universal morphism $A\dashrightarrow \zX(f)\times_D\zX(g)$ is a weak equivalence, i.e., if and only if the square is a homotopy fiber square.
\end{proof}

\begin{rem}
\emph{Our philosophy has been to refer to the upper left vertex of a commutative square as a model for the homotopy pullback of the square's cospan when the universal morphism from it to a canonical representative of the homotopy pullback is a weak equivalence. In view of Corollary \ref{HoFibSqCor} and Definition \ref{HoFibSqDef} it makes therefore sense to regard the standard pullback $\zX(f)\times_D\zX(g)$ as a representative of $B\times_D^hC\,,$ provided the underlying model category is right proper and equipped with an FFF. Actually the homotopy pullback $B\times_D^hC$ is defined in \cite{Hir} as being this representative. If the lower right vertex of the square is fibrant, the homotopy pullback of \cite{Hir}, which is well defined as an object of the model category, is a canonical representative of our full homotopy pullback, which is only defined up to a zigzag of weak equivalences.}
\end{rem}

Next we prove that there is a {\bf pasting law for model squares}.

\begin{prop}\label{PasLaw}
Let
\begin{equation}\label{ComDia}
\begin{tikzpicture}
 \matrix (m) [matrix of math nodes, row sep=3em, column sep=2.5em]
   {A & & B & & C \\
    D & & E & & F \\};
 \path[->]
 (m-1-1) edge [->] (m-1-3)
 (m-1-3) edge [->] (m-1-5)
 (m-2-1) edge [->] (m-2-3)
 (m-2-3) edge [->] (m-2-5)
 (m-1-1) edge [->] (m-2-1)
 (m-1-3) edge [->] (m-2-3)
 (m-1-5) edge [->] (m-2-5);
\end{tikzpicture}
\end{equation}
be a commutative diagram in a model category. If the right square is a model square, i.e., if $B$ is a model of the homotopy pullback $C\times_F^h E\,,$ then the left square is a model square if and only if the total square is a model square.
\end{prop}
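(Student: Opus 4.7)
The plan is to reduce the claim to the classical pasting law for strict pullbacks by choosing a triple of coordinated fibrant replacements of the three cospans involved, so that the standard pullbacks of the right, left, and total replacements are canonical representatives in the sense of Theorem \ref{IndHoPull}.

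First, I would build a canonical representative of the right homotopy pullback that the other two can be expressed in terms of. Starting from a fibrant C-replacement functor applied to $C\to F\leftarrow E$ and then factoring both resulting arrows as trivial cofibrations followed by fibrations, I obtain a replacement $\tilde C\twoheadrightarrow\tilde F\twoheadleftarrow\tilde E$ with three fibrant nodes and both arrows fibrations. By the model hypothesis on the right square, the universal morphism $u_B\colon B\to\tilde E\times_{\tilde F}\tilde C=:\tilde B$ is a weak equivalence, and $\tilde B$ is fibrant with both projections $\tilde B\twoheadrightarrow\tilde E$ and $\tilde B\twoheadrightarrow\tilde C$ being fibrations (pullbacks of fibrations). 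I would then factor the composite $D\to E\to\tilde E$ as $D\stackrel{\sim}{\rightarrowtail}\tilde D\twoheadrightarrow\tilde E$; since $\tilde E$ is fibrant, so is $\tilde D$.

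Second, I would verify that these data assemble into fibrant replacements of the left and total cospans that satisfy the hypotheses of Theorem \ref{IndHoPull}. The left cospan $B\to E\leftarrow D$ is weakly equivalent to $\tilde B\twoheadrightarrow\tilde E\twoheadleftarrow\tilde D$ via $u_B$, $E\stackrel{\sim}{\to}\tilde E$, and $D\stackrel{\sim}{\to}\tilde D$, the two squares commuting by construction of $u_B$ and by the factorization of $D\to\tilde E$. The total cospan $C\to F\leftarrow D$ is weakly equivalent to $\tilde C\twoheadrightarrow\tilde F\twoheadleftarrow\tilde D$, where the right arrow is the composite fibration $\tilde D\twoheadrightarrow\tilde E\twoheadrightarrow\tilde F$ and where commutativity of the right square follows from chasing around the original diagram. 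By Theorem \ref{IndHoPull}, the strict pullbacks $\tilde B\times_{\tilde E}\tilde D$ and $\tilde C\times_{\tilde F}\tilde D$ are canonical representatives of $B\times_E^h D$ and $C\times_F^h D$.

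Third, I would invoke the classical pasting law for strict pullbacks: since $\tilde B=\tilde E\times_{\tilde F}\tilde C$ is a pullback, the canonical map $\tilde B\times_{\tilde E}\tilde D\to\tilde C\times_{\tilde F}\tilde D$ is an isomorphism, and by universal property it intertwines the universal morphism $A\to\tilde B\times_{\tilde E}\tilde D$ from the left square with the universal morphism $A\to\tilde C\times_{\tilde F}\tilde D$ from the total square. Consequently one is a weak equivalence if and only if the other is, which by Theorem \ref{IndHoPullMod} is exactly the statement that $A$ is a model of the left homotopy pullback if and only if it is a model of the total one.

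The main obstacle is bookkeeping rather than novelty: one must verify that the replacement squares really commute with the original commutative rectangle and that the two universal morphisms from $A$ correspond under the pasting isomorphism. Once these compatibilities are in place, the classical pasting law for strict pullbacks carries all the real content, and the robustness of the notion of model with respect to the choice of replacement (Theorem \ref{IndHoPullMod}) absorbs the remaining arbitrariness.
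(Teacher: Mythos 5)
Your proposal is correct and follows essentially the same strategy as the paper's proof: replace the right cospan by a coordinated fibrant replacement with fibrations, use the model hypothesis to identify $B$ up to weak equivalence with the strict pullback of that replacement, recognize the resulting data as canonical replacements of the left and total cospans, and conclude via the strict pasting law together with the independence of the model condition from the choice of replacement (Remark \ref{IndRep31}). The only difference is cosmetic: the paper factors a single arrow $RC\to RF$ into a fibration and works directly with $RD$ and $RE$, whereas you factor both arrows of the right cospan and additionally replace $D$ by $\tilde D$, which changes nothing essential.
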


\begin{proof}
We apply a fibrant C-replacement functor $R$ to the commutative diagram \eqref{ComDia} and factor the morphism $$R(C\stackrel{\zk}{\rightarrow}F)=RC\stackrel{R\zk}{\longrightarrow}RF=RC\stackrel{\sim}{\rightarrow}F(R\zk){\twoheadrightarrow} RF$$ into a weak equivalence followed by a fibration. Moreover, we set $P:=F(R\zk)\times_{RF}RE$ and $Q:=P\times_{RE} RD$ and thus get the following commutative diagram:

\vspace{0.25cm}

\begin{equation}\label{AdjComCub}
\begin{tikzcd}[back line/.style={densely dotted}, row sep=1em, column sep=1em]
A \ar[dr, "\sim"] \ar{rrr} \ar{ddd} \ar[ddrr, bend right, dashed]
  & & & B \ar{ddd} \ar[dr, "\sim"] \ar{rrr} \ar[ddrr, bend right, dashed] & & & C \ar[dr, "\sim"] \ar{ddd} & & \\
& RA \ar{rrr} \ar{ddd} \ar[dr, dashed]
  & & & RB \ar{rrr} \ar[dr, dashed] & & & RC \ar{ddd} \ar[dr, "\sim"] & \\
& & Q \ar{rrr} \ar{ddd} & & & P \ar{rrr} & & & F(R\zk) \ar[ddd, twoheadrightarrow] \\
D \ar{rrr} \ar[dr, "\sim"] & & & E \ar[dr, "\sim"] \ar{rrr} & & & F \ar[dr, "\sim"] & &\\
& RD \ar{rrr} \ar[dr, equal] & & & RE \ar[dr, equal] \ar{rrr} \ar[crossing over, leftarrow]{uuu} & & & RF \ar[dr, equal] &\\
& & RD \ar{rrr} & & & RE \ar{rrr} \ar[crossing over, leftarrow]{uuu} & & & RF
\end{tikzcd}
\end{equation}

\vspace{0.65cm}

\noindent As the universal arrow $B\dashrightarrow P$ (resp., $A\dashrightarrow Q$) is the unique arrow $B\to P$ (resp., $A\to Q$) that makes the corresponding triangles commutative, this arrow coincides with the composite $B\stackrel{\sim}{\to}RB\dashrightarrow P$ (resp., $A\stackrel{\sim}{\to}RA\dashrightarrow Q$). Since the right square of \eqref{ComDia} is a model square, the universal arrow $B\dashrightarrow P$ is a weak equivalence in view of Remark \ref{IndRep31}, and therefore the universal arrow $RB\dashrightarrow P$ is a weak equivalence. In view of closeness of fibrations under pullbacks the arrow $P\to RE$ is a fibration. From here it follows that the left square in \eqref{ComDia} is a model square if and only if $$A\dashrightarrow Q=P\times_{RE}RD\cong F(R\zk)\times_{RF}RD$$ is a weak equivalence, which is the case if and only if the total square of \eqref{ComDia} is a model square.
\end{proof}

\vspace{0.25cm}

The next result is valid for homotopy fiber squares \cite{Hir} in a right proper model category with an FFF. We prove that it hold also for model squares in an arbitrary model category.\medskip

\begin{prop}\label{ComPara}
Let $ABCD$ and $A'B'C'D'$ be two commutative squares in a model category $\tt M\,$. If there exist four $\tt M$-morphisms from the vertices of the first square to the corresponding vertex of the second such that the four resulting squares commute and if these $\tt M$-morphisms are weak equivalences, then the first square is a model square if and only if the second is.
\end{prop}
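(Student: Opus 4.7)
The plan is to reduce both squares to canonical representatives of their respective homotopy pullbacks in a \emph{functorial} way, and then conclude by the 2-out-of-3 axiom. Write $X=\{C\to D\leftarrow B\}$ and $X'=\{C'\to D'\leftarrow B'\}$ in $\tt Fun(I,M)\,$. First I would assemble the three $\tt M$-weak equivalences $B\to B',\,C\to C',\,D\to D'$ into a natural transformation $\zvf:X\Rightarrow X'\,.$ By construction this is an objectwise weak equivalence, hence a weak equivalence with respect to the injective model structure on cospans (which exists in any $\tt M$ by the proposition recalled in Section 4).

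Second, I would apply an injective fibrant C-replacement functor $R_{\op{inj}}$ to $\zvf\,.$ By functoriality this yields injectively fibrant cospans $R_{\op{inj}}X$ and $R_{\op{inj}}X'\,,$ both of the canonical form $G\twoheadrightarrow H_{\op{f}}\twoheadleftarrow E\,,$ connected by a natural transformation $R_{\op{inj}}\zvf$ that remains an objectwise weak equivalence. Since the constant-functor/$\op{Lim}$ adjunction is a Quillen adjunction with respect to the injective structure, Brown's lemma (as recalled in Section 3) supplies a weak equivalence
$$\op{Lim}(R_{\op{inj}}\zvf):\op{Lim}(R_{\op{inj}}X)\stackrel{\sim}{\to}\op{Lim}(R_{\op{inj}}X')$$
between two chosen canonical representatives of the full homotopy pullbacks $B\times_D^{h}C$ and $B'\times_{D'}^{h}C'\,.$

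Third, I would assemble the commutative square whose top edge is the given weak equivalence $A\to A'\,,$ whose bottom edge is $\op{Lim}(R_{\op{inj}}\zvf)\,,$ and whose vertical edges are the two universal morphisms $A\to\op{Lim}(R_{\op{inj}}X)$ and $A'\to\op{Lim}(R_{\op{inj}}X')$ associated with the cones arising from the two squares. Commutativity follows from the uniqueness clause in the universal property of $\op{Lim}(R_{\op{inj}}X')\,$: the two composites $A\to\op{Lim}(R_{\op{inj}}X')$ are both induced by the \emph{same} cone $A^*\to R_{\op{inj}}X'\,,$ namely the one obtained by pasting the cube's four side faces (which are commutative by hypothesis) with the naturality square $R_{\op{inj}}\zvf\circ(X\Rightarrow R_{\op{inj}}X)=(X'\Rightarrow R_{\op{inj}}X')\circ\zvf\,.$ Once the square commutes, the 2-out-of-3 axiom forces the left vertical to be a weak equivalence if and only if the right vertical is. By Theorem \ref{IndHoPullMod} this means precisely that the square $ABCD$ is a model square iff $A'B'C'D'$ is one.

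The main obstacle is purely organizational: ensuring that the comparison square really commutes, which is where one pays the price for packaging everything through the injective fibrant replacement rather than handling the two cones separately. Once the cube commutes and $R_{\op{inj}}$ is natural, the rest is Brown's lemma and 2-out-of-3. Note that the argument uses only the existence of the injective structure on cospans, which is granted in any model category, so no right properness hypothesis is needed, as claimed.
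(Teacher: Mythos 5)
Your proof is correct and follows essentially the same route as the paper's: functorially replace the whole parallelepiped of cospans by fibrant ones connected by a weak equivalence, use that $\op{Lim}$ sends weak equivalences between fibrant cospans to weak equivalences, check that the comparison square commutes via naturality and the adjunction, and conclude by 2-out-of-3 together with Remark \ref{IndRep31}. The only (harmless) difference is that you invoke the injective fibrant C-replacement functor on the cospan category wholesale, whereas the paper assembles a $\op{Ree}_{\op{I}}$-fibrant replacement by hand (objectwise fibrant replacement $R$ followed by a functorial factorization of one leg).
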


\begin{proof}
First we apply a fibrant C-replacement functor $R$ to the commutative parallelepiped, which is described in the statement of Proposition \ref{ComPara}. Then we factor the morphism $$R(B\stackrel{\zk}{\rightarrow}D)=RB\stackrel{R\zk}{\longrightarrow}RD=RB\stackrel{\sim}{\rightarrow}F(R\zk){\twoheadrightarrow} RD$$ into a weak equivalence followed by a fibration and proceed analogously for $R(B'\stackrel{\zk'}{\to}D')\,,$ using a {\it functorial factorization system}. We also set $P:=F(R\zk)\times_{RD}RC$ and $P':=F(R\zk')\times_{RD'}RC'$. Since the factorization system used is functorial, we get an arrow $F(R\zk)\stackrel{\sim}{\rightsquigarrow} F(R\zk')\,,$ and thus a universal arrow $P\dashrightarrow P'\,.$ Finally, we have the commutative diagram \eqref{BigDia}.\medskip

The two commutative parallelograms with four red vertices in \eqref{BigDia} are a weak equivalence from the $\op{Ree}_{\op{I}}$-fibrant cospan $RC\to RD\twoheadleftarrow F(R\zk)$ to the $\op{Ree}_{\op{I}}$-fibrant cospan $RC'\to RD'\twoheadleftarrow F(R\zk')\,.$ Since the limit or pullback functor transforms weak equivalences between $\op{Ree}_{\op{I}}$-fibrant cospans into weak equivalences, the universal arrow $P\dashrightarrow P'$ is a weak equivalence. As the square $ABCD$ (resp., $A'B'C'D'$) is a model square if and only if the universal arrow $A\dashrightarrow P$ (resp., $A'\dashrightarrow P'$) is a weak equivalence, it follows that $ABCD$ is a model square if and only if $A'B'C'D'$ is a model square.

\begin{equation}\label{BigDia}
\begin{tikzcd}[back line/.style={densely dotted}, row sep=1.35em, column sep=0.75em]
& & & & & & & & A' \ar[rrdd, bend right, dashed, color=magenta] \ar[rd, "\sim", thick]\ar[rrr, thick]\ar[ddd, thick] & & & B' \ar[rd, "\sim", thick]\ar[ddd, thick] & &  \\
& & & & & & & & & RA'\ar[rd, thick, dashed] \ar[rrr, thick, color=blue] \ar[ddd, thick, color=blue] & & & RB'\ar[rd, thick, "\sim"] \ar[ddd, thick, color=blue] &  \\
& & & & & & & & & & P' \ar[rrr, thick, color=red] \ar[ddd, thick, color=red] & & & \color{red}{F(R\zk')} \ar[ddd, thick, twoheadrightarrow, color=red] \\
& & & & & & & & C'\ar[rd, "\sim", thick] \ar[rrr, thick] & & & D'\ar[rd, "\sim", thick] & &  \\
& & & & & & & & & RC'\ar[rd, equal] \ar[rrr, thick, color=blue] & & & RD'\ar[rd, equal] &  \\
& & & & & & & & & & \color{red}{RC'} \ar[rrr, thick, color=red] & & & \color{red}{RD'} \\
A \ar[rrdd, bend right, dashed, color=magenta]\ar[rd, "\sim", thick]\ar[rrr, thick]\ar[ddd, thick]\ar[rrrrrrrruuuuuu, "\sim"] & & & B \ar[rd, "\sim", thick]\ar[ddd, thick]\ar[rrrrrrrruuuuuu, "\sim"] & & & & & & & & & & \\
& RA \ar[dr, thick, dashed]\ar[color=blue, rrrrrrrruuuuuu, "\sim"]\ar[rrr, thick, color=blue]\ar[ddd, thick, color=blue] & & & RB \ar[rd, thick, "\sim"]\ar[color=blue, rrrrrrrruuuuuu, "\sim"]\ar[ddd, thick, color=blue] & & & & & & & & \\
& & P \ar[color=red, rrrrrrrruuuuuu, dashed]\ar[rrr, thick, color=red]\ar[ddd, thick, color=red] & & & \color{red}{F(R\zk)} \ar[color=red, rrrrrrrruuuuuu, "\sim", rightsquigarrow]  \ar[, color=red, ddd, thick, twoheadrightarrow]& & & & & & \\
C\ar[rd, "\sim", thick]\ar[rrrrrrrruuuuuu, "\sim"] \ar[rrr, thick] & & & D\ar[rd, "\sim", thick]\ar[rrrrrrrruuuuuu, "\sim"] & & & & & & & & & &\\
& RC \ar[dr, thick, equal]\ar[color=blue, rrrrrrrruuuuuu, "\sim"]\ar[rrr, thick, color=blue] & & & RD \ar[dr, thick, equal]\ar[color=blue,rrrrrrrruuuuuu, "\sim"] & & & & & & & & \\
& & \color{red}{RC} \ar[rrr, thick, color=red]\ar[color=red, rrrrrrrruuuuuu, "\sim"]  & & & \color{red}{RD}\ar[color=red, rrrrrrrruuuuuu, "\sim"] & & & & & & \\
\end{tikzcd}
\end{equation}
\end{proof}

\section{Concluding remarks}

Building on ideas from works by Beilinson, Costello, Drinfeld, Gwilliam, Schreiber, Paugam, To\"en, Vezzosi, and Vinogradov \cite{BD04, CG1, P11, Paugam1, TV05, TV08, Vino}, Di Brino and two of the authors of the present paper have introduced derived algebraic geometry over the ring $\cD$ of differential operators of an underlying affine scheme, as a suitable framework for investigating the solution space of a system of partial differential equations up to symmetries \cite{KTRCR, HAC, PP}. The implementation of the associated research program requires that the tuple $$({\tt DG\cD M, DG\cD M, DG\cD A}, \tau, \mathbf{P})$$ be a homotopic algebraic geometric context (HAGC) in the sense of \cite {TV08}, where $\tt DG\cD M $ is the symmetric monoidal model category of differential graded $\cD$-modules, the subcategory $\tt DG\cD A$ is the model category of differential graded $\cD$-algebras, $\zt$ is an appropriate model pre-topology on the opposite category of $\tt DG\cD A$ and $\mathbf{P}$ is a compatible class of morphisms. We expect that the challenging proof of the HAGC theorem will be based on a generalization of the concept of homotopy fiber sequence and a generalization of Puppe's long exact sequence. We are convinced that the notion of model square or homotopy fiber square in an arbitrary model category stands now on a solid mathematical foundation and that this enables us to prove the HAGC theorem and thus to take an important step towards fully working through the above-mentioned program.

{}
\vfill
{\emph{email:} {\sf alisa.govzmann@uni.lu}; \emph{email:} {\sf damjan.pistalo@uni.lu}; \emph{email:} {\sf norbert.poncin@uni.lu}.}

\end{document}